\documentclass{article}
\usepackage{amsmath,amssymb}
\usepackage{graphicx}
\usepackage{hyperref}
\usepackage{mathtools}
\usepackage[thmmarks]{ntheorem}
\usepackage[utf8]{inputenc}

\newtheorem{remark}[subsection]{Remark}
\newtheorem{lemma}[subsection]{Lemma}
\newtheorem{theorem}[subsection]{Theorem}
\newtheorem{theorem+definition}[subsection]{Theorem \& Definition}
\theorembodyfont{\upshape}
\newtheorem{definition}[subsection]{Definition}

\theoremstyle{nonumberplain}
\theoremsymbol{\ensuremath{\square}}
\newtheorem{proof}{Proof}

\newcommand\N{\mathbb{N}}

\newcommand\PP{\mathbb{P}}
\newcommand\OO{\mathbb{O}}

\DeclareMathOperator{\Area}{Area}
\DeclareMathOperator{\Dist}{Dist}

\DeclarePairedDelimiter\abs{\lvert}{\rvert}

\title{Detecting Relatively Quasiconvex Subgroups and Their Induced Peripheral Structure}
\author{Thomas Carstensen \\ Kiel University \\\href{mailto:carstensen@math.uni-kiel.de}{carstensen@math.uni-kiel.de}}
\date{}

\begin{document}

\maketitle

\begin{abstract}
This paper proves under certain conditions the existence of an algorithm, which detects relatively quasiconvex subgroups $H$ of relatively hyperbolic groups $(G,\PP)$. Additionally, this algorithm outputs an induced peripheral structure of $(G,\PP)$ on $H$.
\end{abstract}

\section{Introduction}\label{sec:Introduction}

It is well-known that many algorithmic problems are solvable for hyperbolic groups. Examples are the computability of the hyperbolicity constant for a given finite generating set of the group \cite{Papasoglu1996}, the word problem and the conjugacy problem.

Let $G$ be a hyperbolic group with finite generating set $X$ and $H$ some subgroup of $G$. $H$ is called a \emph{quasiconvex subgroup} of $G$, if there is some $\nu>0$ such that any geodesic $v$ between two elements of $H$ in the Cayley graph $\Gamma(G,X)$ lies within a $d_X$-distance of at most $\nu$ from $H$. $\nu$ is then called a \emph{quasiconvexity constant} for $H$.

Quasiconvex subgroups are of interest, because they inherit the geometric structure from the ambient hyperbolic group. They are, for example, hyperbolic groups themselves.

Kapovich \cite{Kapovich1996} provided a semi-algorithm, which given a finite presentation of a hyperbolic group $G$ and a finite generating set of the subgroup $H$ stops if and only if $H$ is a quasiconvex subgroup of $G$ and if it stops it returns the quasiconvexity constant of $H$.

\smallskip
A common generalization of a hyperbolic group is a group that is hyperbolic relative to some finite collection of subgroups. Such a finite collection of subgroups is often referred to as a peripheral structure of the group. (For definitions see Section~\ref{sec:Preliminaries}.) It turns out that many problems, which are solvable for hyperbolic groups, have a solution for relatively hyperbolic groups as well, provided the parabolic subgroups are reasonably well behaved (cf.~\cite{Dahmani2008}, \cite{Farb1998}, \cite{Bumagin2004}).

It is also possible to generalize the concept of quasiconvex subgroups to relatively hyperbolic groups, leading to relatively quasiconvex subgroups (cf.~\cite{Hruska2010}). As described in Definition~\ref{def:rel qc}, this definition again hinges on the existence of some constant $\nu$, which bounds how far relative geodesics between elements of the subgroup can stray from it. A relatively quasiconvex subgroup is again itself hyperbolic relative to some peripheral structure, which is induced by the peripheral structure of the surrounding group and is called an induced structure.

\smallskip
Kharlampovich et al.~\cite{Kharlampovich2017} gave an algorithm, which detects relatively quasiconvex subgroups of toral relatively hyperbolic groups with peripherally finite index. Toral relatively hyperbolic groups are torsion-free relatively hyperbolic groups with abelian parabolic subgroups and a relatively quasiconvex subgroup has peripherally finite index, if its induced structure consists of conjugates of finite index subgroups of parabolic groups.

To this end, they show in a more general case that it is possible to find a language $L$ on a relatively hyperbolic group such that $L$-quasiconvex subgroups are relatively quasiconvex and such that relatively quasiconvex subgroups with peripherally finite index are $L$-quasiconvex (cf.~\cite{Kharlampovich2017},~Thm.~7.5). Using Stallings-graphs they are able to then show that there is an algorithm detecting $L$-quasiconvex subgroups.

Generalizing this result, Kim~\cite{Kim2021} proved that there is an algorithm detecting any finitely generated relatively quasiconvex subgroup of a toral relatively hyperbolic group. The basis of this generalization is a result by Manning and Mart\'inez-Pedrosa~\cite{Manning2010}, which implies that a subgroup $H$ of a toral relatively hyperbolic group is relatively quasiconvex, if and only if there is a relatively quasiconvex subgroup with peripherally finite index, which is an amalgamated product of $H$ and its induced structure.

\smallskip
The main theorem of this paper provides a more general result using independent and more geometric methods of proof.

\begin{theorem}
Let $G$ be a relatively hyperbolic group with peripheral structure $\PP=\{P_1,\ldots,P_n\}$. Let $X$ be a symmetric finite generating set of $G$. Suppose a finite relative presentation of $(G,\PP)$ and finite presentations for every $P_i$ are given. Let $H$ be a subgroup of $G$ with finite generating set $Y\subseteq X^*$. 

Suppose further that the following are given:
\begin{enumerate}
\item a solution to the membership problem of $(P_i,O)$ for every finitely generated subgroup $O\leq P_i$ and
\item an algorithm, which decides whether a given finitely generated subgroup of some $P_i$ is finite.
\end{enumerate}

Then there is a semi-algorithm which stops, if and only if $H$ is a relatively quasiconvex subgroup of $G$, and if it stops, it returns
\begin{itemize}
\item generating sets of an induced structure $\OO$ on $H$, and
\item $\lambda\geq1,c\geq0$, such that the inclusion $(H,d_{Y\cup\mathcal{O}})\to(G,d_{X\cup\mathcal{P}})$ is a $(\lambda,c)$-quasiisometric embedding.
\end{itemize}
\end{theorem}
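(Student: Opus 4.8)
The plan is to realize the semi-algorithm as a dovetailed search for a finite \emph{certificate} of relative quasiconvexity, every component of which is verifiable under the two algorithmic hypotheses. I work throughout in the relative Cayley graph $\Gamma(G,X\cup\PP)$, which is $\delta$-hyperbolic; the constant $\delta$ and similar geometric data may themselves be treated as guessed parameters of the search, so I never need to compute them a priori. The only ambient decision procedures I require beyond hypotheses (1) and (2) are the word problem of $G$ and the membership problem ``$w\in P_i$'' for $w\in X^*$, both of which are available from the finite relative presentation and the finite presentations of the $P_i$.

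One direction of the desired quasi-isometric embedding is essentially free and fixes the shape of the certificate. Each $y\in Y$ has $d_{X\cup\PP}(1,y)\le|y|_X$, and every element of an intersection $H\cap gP_ig^{-1}$ has $d_{X\cup\PP}$-length at most $2|g|_X+1$; hence, once a candidate structure $\OO$ is fixed, an explicit $\lambda_0$ gives $d_{X\cup\PP}(1,h)\le\lambda_0\,d_{Y\cup\OO}(1,h)$ for all $h\in H$. The entire content of the theorem lies in the reverse inequality $d_{Y\cup\OO}(1,h)\le\lambda\,d_{X\cup\PP}(1,h)+c$, which is precisely where relative quasiconvexity is needed. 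The search therefore enumerates tuples consisting of constants $\nu,\delta,\lambda\ge1,c\ge0$, a radius $R$, words $g_1,\dots,g_m\in X^*$ with $|g_j|_X\le R$, indices $i_1,\dots,i_m$, and finite sets $O_1,\dots,O_m\subseteq Y^*$; for each tuple I verify that $g_j^{-1}O_jg_j\subseteq P_{i_j}$ (so that $\langle O_j\rangle\le H\cap g_jP_{i_j}g_j^{-1}$, using parabolic membership), that each $\langle O_j\rangle$ is infinite (hypothesis (2)), and the master condition below certifying the reverse inequality. On success I output $\OO=\{\langle O_1\rangle,\dots,\langle O_m\rangle\}$ and $(\lambda,c)$.

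The master condition is a bounded, decidable surrogate for the reverse inequality, obtained from a local-to-global principle. Using $\delta$-hyperbolicity of $\Gamma(G,X\cup\PP)$ together with stability of quasigeodesics, I would prove that the inclusion is a $(\lambda,c)$-quasi-isometric embedding as soon as the reverse inequality holds for all $h\in H$ with $d_{Y\cup\OO}(1,h)$ below an explicit threshold in $\nu$ and $\delta$, and every relative geodesic from $1$ to such an $h$ stays within $d_X$-distance $\nu$ of $H$ with its deep parabolic traversals accounted for by the $\langle O_j\rangle$. These are finitely many conditions, each checkable by the word and parabolic-membership procedures. This quasi-isometric-embedding check is in fact the right global certificate: if some $\langle O_j\rangle$ were a proper, infinite-index part of the true intersection, or if an infinite intersection were omitted from the list, then elements of $H$ lying deep in an uncaptured parabolic would be $d_{X\cup\PP}$-close to $1$ yet $d_{Y\cup\OO}$-far, so the check would fail; thus passing it forces $\OO$ to be genuinely complete and each $\langle O_j\rangle$ to capture its intersection up to bounded error.

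Correctness then splits cleanly. If the search produces a certificate, the master condition and hyperbolicity force the global quasi-isometric embedding, and by the characterization in Definition~\ref{def:rel qc} this exhibits $H$ as relatively quasiconvex with $\OO$ an induced structure. Conversely, if $H$ is relatively quasiconvex with constant $\nu$, then the finiteness results of Hruska~\cite{Hruska2010} bound, in terms of $\nu$ and $\delta$, both the number of $H$-conjugacy classes of infinite intersections $H\cap gP_ig^{-1}$ and the $d_X$-length of generators for each, so a complete and correct tuple appears at some finite stage and passes every test. The step I expect to be the main obstacle is exactly the local-to-global principle underlying the master condition, namely proving that relative quasiconvexity, a property quantifying over infinitely many relative geodesics, is equivalent to a bounded finite check. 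Establishing the accompanying completeness bound — that no infinite parabolic intersection can hide beyond the searched ball — is the delicate part, and I would derive it from quasiconvexity by showing that any such hidden intersection would produce relative geodesics between points of $H$ straying arbitrarily far from $H$, contradicting the constant $\nu$.
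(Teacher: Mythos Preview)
Your high-level strategy---enumerate candidate peripheral structures $\OO$ and try to verify that $(H,d_{Y\cup\mathcal O})\hookrightarrow(G,d_{X\cup\mathcal P})$ is a quasi-isometric embedding via a local-to-global principle---is exactly the paper's strategy. But two load-bearing steps in your proposal are either missing or incorrect, and they are precisely where the paper does real work.

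First, your ``master condition'' is not actually a finite check as written. You propose to verify the reverse inequality for all $h\in H$ with $d_{Y\cup\mathcal O}(1,h)$ below some threshold $L$. But $Y\cup\mathcal O$ is an \emph{infinite} alphabet: each $O_j$ is typically infinite, so the ball of radius $L$ in $(Y\cup\mathcal O)^*$ contains infinitely many words. The paper's Lemma~\ref{lem:2} is devoted exactly to this reduction: it shows that it suffices to test only those geodesic words whose $\mathcal O$-letters have $Y$-length bounded by an explicit constant $D$ (computed from a distortion bound via the operation (O1)), because longer parabolic letters are forced by BCP to be connected to components of any comparison geodesic. This is the main technical idea of the paper and your proposal does not supply it; the phrase ``deep parabolic traversals accounted for by the $\langle O_j\rangle$'' gestures at it but does not reduce anything to a finite check.

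Second, your inference that ``passing the quasi-isometric-embedding check forces $\OO$ to be an induced structure'' is false. As Remark~\ref{rem:not induced} observes, if each $O_j$ is a proper \emph{finite-index} subgroup of $H\cap P_{i_j}^{g_j}$, the inclusion is still a quasi-isometric embedding, yet $\OO$ is not an induced structure (it fails almost-malnormality). Your argument only rules out infinite-index defects. The paper handles this with an independent, explicit test (step~3 of the partial algorithm): it searches a bounded $Y$-ball for elements $h\in H\cap g_j^{-1}P_ig_k$ with $h\notin O_j$ when $j=k$, and Lemma~\ref{lem:1} guarantees that if such an $h$ exists at all then a short one exists. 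Without this additional test, the algorithm could halt on a certificate that is not an induced structure.
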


It is an easy generalization of a theorem by Osin \cite{Osin2006}, that the relative quasiconvexity constant $\nu$ for $H$ in $G$ can be calculated from the outputs of this semi-algorithm. (Theorem~\ref{thm:qc-embedding->rel qc})

\smallskip
Section~\ref{sec:Preliminaries} of this paper will introduce necessary preliminaries and some algorithmic results that are generally helpful when dealing with relatively hyperbolic groups. Section~\ref{sec:Distortion} gives a short overview of distortion functions, which play a crucial role in the construction of the semi-algorithm in the main theorem. Section~\ref{sec:Algorithm} is then dedicated to proving the main theorem as well as some auxiliary results that are used in the proof. Section~\ref{sec:Discussion} gives a short discussion of the main theorem, in particular how its assumptions might be relaxed and a sketch of a more constructive algorithm which might turn out to be more efficient for an actual implementation.

\section{Preliminaries}\label{sec:Preliminaries}

This section gives definitions for some fundamental concepts such as relatively hyperbolic groups and relatively quasiconvex subgroups, as well as important results that provide basic tools to algorithmically investigate these groups.

Note that throughout the rest of this paper two different notions of length are used. For some set $X$ and some word $w\in X^\ast$ over $X$, $\abs{w}$ will denote the number of letters in $w$.

For some group $G$ with generating set $X$ and some element $g\in G$, $\abs{g}_X$ denotes the word length of $g$ with respect to $X$, i.e.\ the length of some shortest word in $X^\ast$ representing $g$.

In a common abuse of notation, a word $w\in X^\ast$ will often be identified with the element which it represents. So, while $\abs{w}$ denotes the length of $w$, $\abs{w}_X$ denotes the word length with respect to $X$ of the element represented by $w$.

The first important definition is that of a relatively hyperbolic group. The following definition is due to Osin (\cite{Osin2006}, Definition~2.35).
\begin{definition}
Let $G$ be a group. A finite set $\PP=\{P_1,\ldots,P_n\}$ of subgroups of $G$ is called a \emph{peripheral structure of $G$}.

Let $\langle X,P_1,\ldots,P_n\mid R\rangle$ be a finite relative presentation of $(G,\PP)$. For each $i\in\{1,\ldots,n\}$ let $\tilde{P}_i$ be an isomorphic copy of $P_i$, such that $\tilde{P}_1,\ldots,\tilde{P}_n,X$ are mutually disjoint. Let $\mathcal{P}:=\bigcup (\tilde{P}_i\setminus\{1\})$.

$G$ is \emph{hyperbolic relative to $\PP$}, if the relative Dehn function of this presentation is linear, i.e.\ if there exists $C>0$, such that any word $w\in(X\cup\mathcal{P})^\ast$ of length at most $l$ representing the trivial element in $G$ can be written as the product of at most $C\cdot l$ conjugates of elements of $R$ in the free product $F(X)\ast(\ast_{i\in\{1,\ldots,n\}}\tilde{P}_i)$.
\end{definition}

A natural class of subgroups of a relatively hyperbolic group $(G,\PP)$ are the relatively quasiconvex subgroups.

\begin{definition}[Osin \cite{Osin2006}, Def.~4.9]\label{def:rel qc}
Let $(G,\PP)$ be a re\-la\-tive\-ly hyperbolic group and $X$ a finite generating system of $G$. $H\leq G$ is called a \emph{relatively quasiconvex subgroup} of $G$, if there exists $\nu\geq0$, such that every vertex of a geodesic $p$ in $\Gamma(G,X\cup\mathcal{P})$ with endpoints in $H$ has at most a $d_X$-distance of $\nu$ from $H$.
\end{definition}

Relatively quasiconvex subgroups inherit the geometric structure of $G$ and are themselves hyperbolic relative to a peripheral structure that is induced by $\PP$.

\begin{theorem+definition}[Hruska \cite{Hruska2010}, Thm.~9.1]\label{thm+def:induced structure}
Let $(G,\PP)$ be re\-la\-tive\-ly hyperbolic and $H$ a relatively quasiconvex subgroup of $G$. 

Then the subgroups in
\[
\bar{\OO}=\{H\cap P^g\mid g\in G,P\in\PP,\abs{H\cap P^g}=\infty\}
\]
lie in finitely many conjugacy classes in $H$. For a set of representatives $\OO$ of these conjugacy classes, $(H,\OO)$ is relatively hyperbolic. The peripheral structure $\OO$ is called an \emph{induced structure} of $(G,\PP)$ on $H$.

Moreover, the inclusion $(H,d_{Y\cup\mathcal{O}})\to(G,d_{X\cup\mathcal{P}})$ is a quasiisometric embedding for any finite relative generating set $Y$ of $(H,\OO)$.
\end{theorem+definition}

The following algorithmic result by Dahmani \cite{Dahmani2008} gives conditions for when it is possible to compute an upper bound for the relative Dehn function as well as a hyperbolicity constant for the Cayley graph $\Gamma(G,X\cup\mathcal{P})$.

\begin{theorem}[Dahmani \cite{Dahmani2008}]\label{thm:dehn+delta}
Let $G$ be a group with some peripheral structure $\PP=\{P_1,\ldots,P_n\}$ consisting of finitely presented subgroups. Suppose each $P_i$ has solvable word problem. Let $\langle X,P_1,\ldots,P_n\mid R\rangle$ be a finite relative presentation of $G$.

Then there is an semi-algorithm that stops if and only if the relative presentation of $G$ satisfies a linear isoperimetric inequality. If it stops, it provides:
\begin{enumerate}
\item $K>0$ such that $\Area(w)<K\abs{w}$ for all relators $w$.
\item $\delta\geq0$ such that $\Gamma(G,X\cup\mathcal{P})$ is $\delta$-hyperbolic.
\end{enumerate}
\end{theorem}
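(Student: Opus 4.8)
The plan is to construct the semi-algorithm by dovetailing two recursively-enumerable processes: one that produces, for more and more words $w\in(X\cup\mathcal{P})^\ast$ representing the identity, explicit upper bounds on $\Area(w)$, and one that tests a halting certificate guaranteeing a global linear isoperimetric inequality. Both outputs $K$ and $\delta$ will be read off from such a certificate by means of a local-to-global principle: it suffices to verify the isoperimetric inequality for trivial words whose relative length is below a threshold computable from the candidate constant, after which both the global bound $K$ and a hyperbolicity constant $\delta$ follow by a standard bootstrapping argument.

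First I would set up the enumeration of area bounds. Since each $P_i$ is finitely presented and has solvable word problem, I can decide when a syllable from $\tilde{P}_i$ is trivial and when two adjacent syllables from the same $\tilde{P}_i$ multiply to a third; this lets me freely reduce words of the free product $F(X)\ast(\ast_i \tilde{P}_i)$ and identify the word over $X\cup\mathcal{P}$ that any such element represents. Enumerating all products of at most $N$ conjugates of elements of $R^{\pm1}$ in the free product then enumerates all words representing the identity, each delivered together with the witness $\Area(w)\le N$. In particular, for any finite list of trivial words the procedure eventually furnishes upper bounds on their areas.

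The halting test runs in parallel over candidate pairs $(K,L)$ with $K,L\in\N$ increasing. For each pair I would attempt to certify that $\Area(w)<K\abs{w}$ holds for every word $w$ of relative length $\abs{w}\le L$ representing the identity, using the area bounds produced by the enumeration above. The essential difficulty is that, because each parabolic syllable ranges over the infinite set $\tilde{P}_i\setminus\{1\}$, there are infinitely many such $w$ even with $\abs{w}\le L$ bounded, so the verification must be reduced to a finite, decidable computation. The key reduction is that whether a candidate inequality holds for a word of bounded relative length depends only on the finitely many \emph{interaction patterns} among its syllables, that is, on which sub-syllables coincide or cancel in the parabolic factors, data that is detectable from the word problem in the $P_i$; one thus replaces the infinite family of words with $\abs{w}\le L$ by finitely many representative patterns and checks each against the enumerated area bounds.

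The main obstacle is establishing the local-to-global step rigorously in the relative setting: I must show there is a computable function $L(K)$ such that verifying $\Area(w)<K\abs{w}$ for all trivial $w$ with $\abs{w}\le L(K)$ forces a global linear relative isoperimetric inequality, with a constant $K'$ and a hyperbolicity constant $\delta$ for $\Gamma(G,X\cup\mathcal{P})$ both computable from $K$. This is the relative analogue of the classical bootstrapping of a sufficiently good isoperimetric inequality to hyperbolicity, and it simultaneously yields soundness and completeness: if $(G,\PP)$ is relatively hyperbolic, some pair $(K,L(K))$ eventually passes the finite check and the procedure halts, returning $K'$ and $\delta$; conversely, if no linear isoperimetric inequality holds, then for every $K$ there is a trivial word violating $\Area(w)<K\abs{w}$, so no candidate pair is ever certified and the procedure runs forever. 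I expect the careful formulation of the syllable-interaction reduction and the effective bound $L(K)$ to be the technical heart of the argument.
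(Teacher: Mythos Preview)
The paper does not prove this statement at all: it is quoted as a result of Dahmani and used as a black box. There is therefore no ``paper's own proof'' to compare your proposal against.

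On its own merits, your outline identifies the right architecture---enumerate areas, test candidate constants, and invoke a local-to-global principle---and this is indeed the spirit of Dahmani's argument. However, what you have written is a plan rather than a proof, and the two places you flag as ``the technical heart'' are exactly where the difficulty lies and are not resolved.

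First, the ``interaction patterns'' reduction is too vague to be a proof step. You assert that whether $\Area(w)<K\abs{w}$ holds for $\abs{w}\le L$ depends only on finitely many combinatorial patterns among parabolic syllables, but this is not obvious: the relative area of a word can in principle depend on the specific parabolic elements appearing, not merely on which of them coincide or cancel. Making this reduction precise requires a structural statement about relative van Kampen diagrams over words of bounded relative length---essentially that one can parametrise such diagrams by finitely many templates with parabolic labels as free parameters---and that statement needs its own argument, which you have not supplied.

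Second, the existence of a computable $L(K)$ such that the inequality on words of length $\le L(K)$ forces a global linear bound is asserted, not proved. In the absolute case this is a nontrivial theorem (Gromov--Papasoglu type bootstrapping), and its relative analogue is precisely what Dahmani establishes; you would need either to cite it or to reproduce it. Without this, the halting direction of your semi-algorithm is unjustified: passing the finite check for some $(K,L)$ does not yet imply anything global.
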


An important restriction of this theorem is that all peripheral subgroups must have a solvable word problem. Moreover, Dahmani gives a concrete algorithm as in the theorem under the condition that solutions to the word problems of all $P_i$ are given.

The following terminology was introduced by Osin \cite{Osin2006}:

\begin{definition}
A subpath of a path $p$ in the Cayley graph $\Gamma(G,X\cup\mathcal{P})$ is called a \emph{$P_i$-component} of $p$, if it is a maximal non-trivial subpath, which is labeled by letters in $P_i$.

A path is \emph{locally minimal} if every $P_i$-component has length $1$.

Two $P_i$-components are said to be \emph{connected} if there is an edge labeled by an element of $P_i$ connecting them. A $P_i$-component of a path $p$ is \emph{isolated}, if it is not connected to some other $P_i$-component of $p$.

A path $p$ is \emph{without backtracking} if all its components are isolated.
\end{definition}

Note that Osin introduces different but analogue terminology for words in $(X\cup\mathcal{P})^\ast$. It can be interpreted as the result of identifying a word $w\in(X\cup\mathcal{P})^\ast$ with the path in the relative Cayley graph $\Gamma(G,X\cup\mathcal{P})$ starting at $1$ and labeled by $w$.

For the sake of simplicity this distinction will be ignored, with the understanding that the given terminology applies through this identification in the obvious way.

In addition, by this identification a word $w\in(X\cup\mathcal{P})^\ast$ is said to be quasigeodesic, if the corresponding path in $\Gamma(G,X\cup\mathcal{P})$ is quasigeodesic.

Osin proved that relatively hyperbolic groups fulfill the BCP-property introduced by Farb \cite{Farb1998}.

\begin{theorem}[cf.~Osin~\cite{Osin2006}, Thm.~3.23]\label{thm:bcp}
Let $G$ be a group with symmetric finite generating set $X$. Let $G$ be hyperbolic relative to some peripheral structure $\PP=\{P_1,\ldots,P_n\}$ of finitely presented subgroups. Let $G$ be finitely presented relative $\PP$. Suppose the word problem in each $P_i$ is solvable. Let $\lambda\geq1$, $c\geq0$.

\smallskip
Then there is an algorithm which calculates $\varepsilon=\varepsilon(\lambda,c)>0$ such that for any locally minimal $(\lambda,c)$-quasi-geodesics $p,q$ with the same endpoints and without backtracking in $\Gamma(G,X\cup\mathcal{P})$ the following is true:
\begin{itemize}
\item[i)] Every vertex of $p$ has an $d_X$-distance of at most $\varepsilon$ from a vertex of $q$.

\item[ii)] For each $P_i$-component $s$ of $p$ with $d_X(s^-,s^+)>\varepsilon$ there is a $P_i$-component $t$ of $q$ that is connected to $s$.

\item[iii)] For connected $P_i$-components $s$ of $p$ and $t$ of $q$ the following holds:
\[
d_X(s^-,t^-),d_X(s^+,t^+)<\varepsilon
\]
\end{itemize}
\end{theorem}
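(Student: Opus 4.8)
The plan is to treat this as an effectivization of Osin's BCP theorem: the \emph{existence} of a constant $\varepsilon(\lambda,c)$ satisfying (i)--(iii) is exactly \cite{Osin2006}, Thm.~3.23, so the only new content is that such an $\varepsilon$ can be \emph{computed} from $\lambda$ and $c$. Accordingly, the first step is to make available the two geometric quantities on which Osin's bound depends. By hypothesis each $P_i$ is finitely presented and has solvable word problem, $G$ is finitely presented relative to $\PP$, and $G$ is relatively hyperbolic, so the relative Dehn function is linear. These are precisely the hypotheses of Theorem~\ref{thm:dehn+delta}, so Dahmani's semi-algorithm terminates and returns a Dehn-function bound $K$ and a hyperbolicity constant $\delta$ for $\Gamma(G,X\cup\mathcal P)$.

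The second step is to re-run Osin's proof of his Thm.~3.23 with $\delta$ and $K$ now in hand and check that every constant it introduces is produced by an explicit computable expression in $\delta,K,\lambda,c$ rather than by a nonconstructive existence argument. Property (i) splits into two parts. Stability of quasigeodesics in the $\delta$-hyperbolic graph $\Gamma(G,X\cup\mathcal P)$ (the Morse lemma) gives an explicit $R=R(\delta,\lambda,c)$ bounding the Hausdorff distance between $p$ and $q$ in the \emph{relative} metric $d_{X\cup\mathcal P}$; this part is standard and effective. The delicate part is upgrading relative-metric closeness to the $d_X$-bound demanded by (i): two vertices can be $d_{X\cup\mathcal P}$-close yet $d_X$-far when joined by a single long peripheral edge, so here one feeds the two paths, together with the geodesic triangles witnessing their fellow-travelling, into a relative van Kampen diagram and uses the linear isoperimetric bound $K$ to control how far each path penetrates a given coset. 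The hypotheses that $p,q$ are locally minimal and without backtracking are exactly what make this coset-penetration analysis well defined, and the resulting estimates are explicit in $\delta$ and $K$.

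For (ii) and (iii) I would argue along the same lines: a $P_i$-component $s$ of $p$ with $d_X(s^-,s^+)>\varepsilon$ forces the path $q$ to enter the same coset of $P_i$, since otherwise the subdiagram bounding the subpaths between the matched endpoints would have area too large for the isoperimetric bound $K$; this produces the connected component $t$ of (ii), and tracking the endpoints inside the same diagram yields the $d_X$-estimates of (iii). Taking $\varepsilon$ to be the maximum of the finitely many explicit constants accumulated across (i)--(iii) gives a single computable $\varepsilon(\lambda,c)$, which the algorithm outputs; it always halts, since under the standing assumption of relative hyperbolicity the only subroutine that could fail to terminate, namely Dahmani's, is guaranteed to stop.

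The main obstacle is the one flagged above: confirming that Osin's argument is genuinely effective at every step, i.e.\ that the passage from $d_{X\cup\mathcal P}$-fellow-travelling to the $d_X$-estimates of (i)--(iii) never conceals a compactness or pure-existence argument. Concretely, one must track the area and coset-penetration bounds through the van Kampen diagrams and verify that they are closed-form functions of $\delta$ and $K$ alone; once that bookkeeping is complete, the theorem follows, because $\delta$ and $K$ are themselves computed by Dahmani's algorithm.
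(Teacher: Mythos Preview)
Your proposal is correct and follows essentially the same approach as the paper. The paper does not give a detailed proof of this theorem; it only remarks that Osin's original argument depends on the linear bound for the relative Dehn function, that the extra hypotheses on the $P_i$ were added precisely so that Theorem~\ref{thm:dehn+delta} applies to compute this bound, and that with this bound in hand Osin's proof can be rewritten effectively---which is exactly the strategy you outline, only in more detail.
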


While Osin does not give an explicit algorithm to compute $\varepsilon$, it is easy to see that his arguments can be rewritten to derive such an algorithm. Osin uses that the relative Dehn function is bounded by a linear function. To construct this algorithm it is therefore necessary to calculate such an upper bound. To this end, the assumptions that every $P_i$ is finitely presented and has solvable word problem were added in the above theorem. By Theorem~\ref{thm:dehn+delta} this suffices to be able to compute a linear upper bound for the relative Dehn function.

Now let $Y\subseteq X^*$ be a generating set of some subgroup $H\leq G$ and let $\OO=\{O_1,\ldots,O_m\}$ where each $O_j$ is a subgroup of $H\cap P_{i_j}^{g_j}$ with $1\leq i_j\leq n$ and $g_j\in X^*$. Then there is a canonical way of mapping each word in $(Y\cup\mathcal{O})^*$ to some word in $(X\cup\mathcal{P})^*$ representing the same group element.

\begin{definition}\label{def:canonical embedding}
The \emph{canonical map} $\iota\colon(Y\cup\mathcal{O})^*\to(X\cup\mathcal{P})^*$ is defined as the canonical extension of the map $Y\cup\mathcal{O}\to(X\cup\mathcal{P})^*$ that maps each element of $Y\subseteq(X\cup\mathcal{P})^*$ to itself and that maps every $o\in O_j$ to the unique word $g_j^{-1}pg_j\in(X\cup\mathcal{P})^*$ representing $o$ with $p\in P_{i_j}$.
\end{definition}

A finitely generated subgroup of a hyperbolic group is quasiconvex, if and only if the inclusion of the subgroup is a quasiisometric embedding. Theorem~\&~Definition~\ref{thm+def:induced structure} generalizes the forward implication by stating that for a relatively quasiconvex subgroup $H$ of $G$ with induced structure $\OO$ the inclusion $(H,d_{Y\cup\mathcal{O}})\to(G,d_{X\cup\mathcal{P}})$ is a quasiisometric embedding.

The following theorem and its proof are straightforward generalizations of a theorem by Osin (\cite{Osin2006}, Thm.~4.13) and can be viewed as generalizations of the converse implication in the above statement.

\begin{theorem}\label{thm:qc-embedding->rel qc}
Let $G$ be a group with symmetric finite generating set $X$ and hyperbolic relative to a collection $\PP=\{P_1,\ldots,P_n\}$ of subgroups of $G$. Let $H\leq G$ with finite generating set $Y\subseteq X^*$.

Let $\OO=\{O_1,\ldots,O_m\}$ such that for each $1\leq j\leq m$ there are $1\leq i_j\leq n$ and $g_j\in X^*$ with $O_j\subseteq H\cap P_{i_j}^{g_j}$.

Suppose the inclusion $(H,d_{Y\cup\mathcal{O}})\to(G,d_{X\cup\mathcal{P}})$ is a $(\lambda,c)$-quasiisometric embedding.

Then $H$ is relatively quasiconvex in $G$ and it is possible to compute a constant $\nu$ as in Definition~\ref{def:rel qc}.

\begin{proof}
Let
\[
\mu:=\max(\{\abs{y}\mid y\in Y\}\cup\{2\abs{g_j}+1\mid 1\leq j\leq m\}).
\]
Let $h\in H$ and $V=z_1\ldots z_l\in(Y\cup\mathcal{O})^*$ be a minimal word representing $h$, i.e.\ $l=\abs{h}_{Y\cup\mathcal{O}}$.

Define $U:=\iota(V)$, and let $U_0$ be a subword of $U$, i.e.
\[
U_0=A\iota(z_r)\ldots\iota(z_{r+s})B
\]
with $\abs{A},\abs{B}\leq\mu$.

Since every subword of $V$ is geodesic, the following holds:
\begin{align*}
\abs{U_0} &\leq 2\mu+\mu(s+1)\\
&= 2\mu+\mu\abs{z_r\ldots z_{r+s}}_{Y\cup\mathcal{O}}\\
&\leq 2\mu+\mu(\lambda\abs{z_r\ldots z_{r+s}}_{X\cup\mathcal{P}}+\lambda c)\\
&\leq 2\mu+\mu(\lambda(\abs{U_0}_{X\cup\mathcal{P}}+2\mu)+\lambda c)
\end{align*}

Therefore, the path $p$ in $\Gamma(G,X\cup\mathcal{P})$ from $1$ to $h$, which is labeled by $U$, is a $(\mu\lambda,2\mu+2\mu^2\lambda+\mu\lambda c)$-quasigeodesic.

Assume $p$ has connected $P_i$-components. Then there is some subword $p_1wp_2$ of $U$ for some $w\in(X\cup\mathcal{P})^\ast$ and $p_1,p_2\in P_i$, which represents an element $p_3\in P_i$. Replace the subpath of $p$ labeled by $p_1wp_2$ with the single edge connecting its endpoints and labeled by $p_3$. This new path is still a $(\mu\lambda,2\mu+2\mu^2\lambda+\mu\lambda c)$-quasigeodesic and its vertex set is a subset of the vertex set of $p$.

Repeating this process eventually yields a $(\mu\lambda,2\mu+2\mu^2\lambda+\mu\lambda c)$-quasigeodesic $\bar{p}$ without backtracking, whose vertex set is a subset of the vertex set of $p$.

For any geodesic $q$ in $\Gamma(G,X\cup\mathcal{P})$ from $1$ to $h$ and any vertex $v$ of $q$ it follows with $\varepsilon=\varepsilon(\mu\lambda,2\mu+2\mu^2\lambda+\mu\lambda c)$ as in the conclusion of Theorem \ref{thm:bcp}, that there exists a vertex $u$ of $\bar{p}$ such that:
\[
d_X(u,v)\leq\varepsilon.
\]

Since $d_X(u,H)\leq\mu$ for any vertex $u$ of $p$ and therefore in particular for any vertex of $\bar{p}$, it follows that $d_X(v,H)\leq\varepsilon+\mu$. Hence, $H$ is relatively quasiconvex as in Definition~\ref{def:rel qc} with $\nu:=\varepsilon+\mu$.
\end{proof}
\end{theorem}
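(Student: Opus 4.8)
The plan is to reduce relative quasiconvexity to a comparison, via the BCP property of Theorem~\ref{thm:bcp}, between an honest relative geodesic and a controlled quasigeodesic manufactured from a $(Y\cup\mathcal{O})$-geodesic in $H$. By left-translating by an element of $H$ (an isometry of $\Gamma(G,X\cup\mathcal{P})$ that preserves both $H$ and $d_X$-distances to $H$), it suffices to bound $d_X(v,H)$ for vertices $v$ of a relative geodesic $q$ from $1$ to an arbitrary $h\in H$. For such $h$ I would fix a geodesic word $V=z_1\cdots z_l\in(Y\cup\mathcal{O})^*$ with $l=\abs{h}_{Y\cup\mathcal{O}}$ and push it forward to $U:=\iota(V)\in(X\cup\mathcal{P})^*$ via the canonical map of Definition~\ref{def:canonical embedding}; the path $p$ labeled by $U$ then runs from $1$ to $h$ and is the object to be compared against $q$.

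First I would show that $p$ is a quasigeodesic. Set $\mu:=\max(\{\abs{y}\mid y\in Y\}\cup\{2\abs{g_j}+1\mid 1\le j\le m\})$, so that $\abs{\iota(z)}\le\mu$ for every letter $z$. For any subword $U_0$ of $U$, written as $A\,\iota(z_r)\cdots\iota(z_{r+s})\,B$ with partial end-letters $\abs{A},\abs{B}\le\mu$, the crude estimate $\abs{U_0}\le 2\mu+\mu(s+1)$, together with the fact that $z_r\cdots z_{r+s}$ is itself $(Y\cup\mathcal{O})$-geodesic, lets me rewrite $s+1=\abs{z_r\cdots z_{r+s}}_{Y\cup\mathcal{O}}$ and then pass to $\abs{U_0}_{X\cup\mathcal{P}}$ by feeding the subword through the $(\lambda,c)$-quasiisometric embedding hypothesis. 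This yields quasigeodesic constants of the shape $(\mu\lambda,c')$ with $c'$ explicitly computable from $\mu,\lambda,c$, uniformly over all subwords, which is exactly the quasigeodesicity of $p$.

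Next I would reduce $p$ to a locally minimal path without backtracking so that Theorem~\ref{thm:bcp} applies. Both properties follow from one kind of move: replace a subpath between two vertices $u,w$ of $p$ by the shorter subpath with the same endpoints — collapsing any $P_i$-component of length $>1$ to the single $\mathcal{P}$-edge through its endpoints (local minimality), and, whenever two $P_i$-components are connected, collapsing the subpath they span to the single $\mathcal{P}$-edge joining its endpoints (no backtracking). Each move fixes endpoints, can only shorten subpaths, and deletes interior vertices, so it preserves the quasigeodesic constants and shrinks the vertex set; iterating gives such a path $\bar p$. I would then apply Theorem~\ref{thm:bcp}(i) with $\varepsilon=\varepsilon(\mu\lambda,c')$ to $\bar p$ and the relative geodesic $q$ (which, being geodesic, is itself locally minimal, without backtracking, and a $(\mu\lambda,c')$-quasigeodesic): every vertex $v$ of $q$ is $d_X$-within $\varepsilon$ of some vertex of $\bar p$. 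Finally, since every vertex of $p$, hence of $\bar p$, is $d_X$-within $\mu$ of $H$ — each is interior to a segment $\iota(z_r)$ with $H$-endpoints $z_1\cdots z_{r-1}$ and $z_1\cdots z_r$, reachable from the appropriate endpoint along at most $\mu$ edges of $X$ without crossing the central $\mathcal{P}$-letter — I conclude $d_X(v,H)\le\varepsilon+\mu$. Thus $H$ is relatively quasiconvex with the computable constant $\nu:=\varepsilon+\mu$, where $\mu$ is explicit and $\varepsilon$ is computable by the algorithmic form of Theorem~\ref{thm:bcp} under the standing hypotheses.

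I expect the main obstacle to be keeping the reduction to $\bar p$ honest: one must verify that collapsing components preserves the quasigeodesic constants, and not merely the endpoints, which is why the moves are phrased as shortenings between already-existing vertices so that every new subpath is dominated by an old one with the same endpoints. A related delicate point is that the distance to $H$ must be measured in $d_X$ rather than in $d_{X\cup\mathcal{P}}$, so each interior vertex of a segment $\iota(o)=g_j^{-1}pg_j$ has to be routed to an $H$-endpoint along $X$-edges only; this forces the use of the left endpoint for vertices preceding the central $\mathcal{P}$-letter and the right endpoint for those following it, and getting that split right is where errors are most likely to creep in.
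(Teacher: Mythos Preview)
Your proposal is correct and follows essentially the same approach as the paper: the same constant $\mu$, the same subword estimate yielding $(\mu\lambda,c')$-quasigeodesicity of $\iota(V)$, the same collapsing procedure to obtain $\bar p$, the same application of Theorem~\ref{thm:bcp}, and the same conclusion $\nu=\varepsilon+\mu$. If anything you are slightly more careful than the paper in explicitly treating local minimality alongside backtracking and in spelling out why interior vertices of $\iota(o)=g_j^{-1}pg_j$ are $d_X$-close to $H$ without crossing the $\mathcal{P}$-edge.
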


\begin{remark}\label{rem:not induced}
The inclusion $(H,d_{Y\cup\mathcal{O}})\to(G,d_{X\cup\mathcal{P}})$ is by Theorem~\&~Definition~\ref{thm+def:induced structure} a quasiisometric embedding if $\OO$ is some induced structure on the relatively quasiconvex subgroup $H$ of $G$.

It is however not the case that any structure $\OO$ as in Theorem~\ref{thm:qc-embedding->rel qc} for which this inclusion is a quasiisometric embedding has to be an induced structure. In fact, let $G$ be a group with finite generating set $X$ and hyperbolic relative to $\PP=\{P_1,\ldots,P_n\}$, and $\OO:=\{O_1,\ldots,O_n\}$, where each $O_i$ is a finite index subgroup of $P_i$ and at least one $O_i$ is a proper subgroup of $P_i$. Then $G$ is not hyperbolic relative $\OO$, since $\OO$ is not an almost malnormal collection (cf.~Osin~\cite{Osin2006}, Prop.~2.36), but the inclusion $(G,d_{X\cup\mathcal{O}})\to(G,d_{X\cup\mathcal{P}})$ is a quasiisometric embedding.
\end{remark}

\section{Distortion}\label{sec:Distortion}

Distortion functions are an important tool in proving the main theorem. This section contains their definition and basic results which are used in Section \ref{sec:Algorithm}.

\begin{definition}
Let $G$ be a group and $H\leq G$ a subgroup. Let $X$ be a finite generating set of $G$ and $Y$ a finite subset of $G$ such that $H\leq\langle Y\rangle$. Then
\[
\Dist_{(G,X)}^{(H,Y)}\colon\N\to\N,\,n\mapsto\max\{\abs{h}_Y\mid h\in H,\,\abs{h}_X\leq n\}
\]
is the \emph{distortion function of $H$ in $G$ with respect to $Y$ and $X$}.
\end{definition}

This definition somewhat generalizes the usual definition of distortion functions. It does not require the set $Y$ to be a generating set of the subgroup $H$. Instead it allows for $Y$ to be a generating set of some subgroup of $G$ containing $H$. In this sense it measures the distortion between two induced word metrics on $H$.

The following two lemmas are straightforward consequences of the definition:

\begin{lemma}\label{lem:distortion facts}
Let $G$ be a group and $H\leq G$. Let $X$ be a finite generating set of $G$ and $Y$ a finite subset of $G$ such that $H\leq\langle Y\rangle$. Then the following holds:
\begin{enumerate}
\item Let $Y'$ be a finite subset of $G$ such that $H\leq\langle Y'\rangle\leq\langle Y\rangle$. Then:
\[
\Dist_{(G,X)}^{(H,Y)}\leq\max\{\abs{y'}_Y\mid y'\in Y'\}\cdot\Dist_{(G,X)}^{(H,Y')}
\]

\item Let $g\in X^*$. Then for all $n\in\N$:
\[
\Dist_{(G,X)}^{(H^g,Y^g)}(n)\leq\Dist_{(G,X)}^{(H,Y)}(n+2\abs{g})
\]
\end{enumerate}
\end{lemma}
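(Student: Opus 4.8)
The plan is to derive both inequalities directly from the definition of the distortion function, handling the two parts independently. For the first part, I would fix $n\in\N$ and let $h\in H$ be arbitrary with $\abs{h}_X\le n$, then bound $\abs{h}_Y$ in terms of $\abs{h}_{Y'}$ and invoke the definition of $\Dist_{(G,X)}^{(H,Y')}$. Since $Y'\subseteq\langle Y\rangle$, every $y'\in Y'$ has finite $Y$-length, so with $M:=\max\{\abs{y'}_Y\mid y'\in Y'\}$ one may take a geodesic $Y'$-word for $h$ (of length $\abs{h}_{Y'}$) and replace each of its letters by a $Y$-word of length at most $M$, yielding $\abs{h}_Y\le M\cdot\abs{h}_{Y'}$. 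As $\abs{h}_X\le n$, the definition gives $\abs{h}_{Y'}\le\Dist_{(G,X)}^{(H,Y')}(n)$, hence $\abs{h}_Y\le M\cdot\Dist_{(G,X)}^{(H,Y')}(n)$; taking the maximum over all such $h$ yields the claim.

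For the second part, the key observation is that conjugation by $g$ is an automorphism $\phi\colon G\to G$, $x\mapsto x^g$, satisfying $\phi(H)=H^g$ and $\phi(Y)=Y^g$, so that $\phi$ restricts to an isomorphism $\langle Y\rangle\to\langle Y^g\rangle$ preserving word length: $\abs{\phi(h)}_{Y^g}=\abs{h}_Y$ for all $h\in H$. I would again fix $n\in\N$ and take an arbitrary element of $H^g$, written as $h^g=\phi(h)$ with $h\in H$, subject to $\abs{h^g}_X\le n$. From $h=g\,h^g\,g^{-1}$ and the triangle inequality, together with $\abs{g^{-1}}_X=\abs{g}_X\le\abs{g}$, one obtains $\abs{h}_X\le\abs{h^g}_X+2\abs{g}\le n+2\abs{g}$. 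Then $\abs{h^g}_{Y^g}=\abs{h}_Y\le\Dist_{(G,X)}^{(H,Y)}(n+2\abs{g})$ by the definition, and the maximum over all such $h^g$ gives the stated bound.

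Neither part is deep: both reduce to expanding words and applying the defining maximum, so I do not anticipate a genuine obstacle. The only points demanding care are directional. In the second part one must express $h$ through $h^g$ rather than the other way around, so that the $2\abs{g}$ error appears inside the argument of the distortion function on the right-hand side. One must also keep the word-length $\abs{g}$ of the word $g\in X^*$ distinct from the $X$-word-length $\abs{g}_X$ of the element it represents, using $\abs{g}_X\le\abs{g}$ to recover precisely the stated constant.
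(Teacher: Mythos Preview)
Your argument is correct in both parts. The paper does not actually supply a proof of this lemma; it merely remarks that it (together with the subsequent lemma) is a ``straightforward consequence of the definition'' and states it without proof. Your write-up is precisely the direct verification the paper alludes to, so there is nothing to compare against and no gap to point out.
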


\begin{lemma}\label{lem:chained distortion}
Let $G$ be a group and $H_2\leq H_1\leq G$ subgroups. Let $X,Y_1,Y_2$ be finite generating sets of $G$, $H_1$ and $H_2$ respectively. Then:
\[
\Dist_{(G,X)}^{(H_2,Y_2)}\leq\Dist_{(H_1,Y_1)}^{(H_2,Y_2)}\circ\Dist_{(G,X)}^{(H_1,Y_1)}
\]
\begin{proof}
Let $n\in\N$ and $h\in H_2$ with $\abs{h}_X\leq n$. Since distortion functions are monotonous, the following holds:
\begin{align*}
\abs{h}_{Y_2} &\leq \Dist_{(H_1,Y_1)}^{(H_2,Y_2)}(\abs{h}_{Y_1})\\
&\leq \Dist_{(H_1,Y_1)}^{(H_2,Y_2)}\left(\Dist_{(G,X)}^{(H_1,Y_1)}(\abs{h}_X)\right)\\
&\leq \Dist_{(H_1,Y_1)}^{(H_2,Y_2)}\circ\Dist_{(G,X)}^{(H_1,Y_1)}(n)
\end{align*}\hspace*{0pt}
\end{proof}
\end{lemma}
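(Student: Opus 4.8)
The plan is to chase a single element of $H_2$ through the two distortion functions one after the other and then take a maximum. First I would fix $n\in\N$ and let $h\in H_2$ be an arbitrary element with $\abs{h}_X\leq n$; the goal is to bound $\abs{h}_{Y_2}$ by the right-hand side evaluated at $n$, since the asserted inequality of functions is exactly the statement that this bound holds for every such $h$ and every $n$.

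The first step exploits the inclusion $H_2\leq H_1$. Because $h$ lies in $H_1=\langle Y_1\rangle$, the quantity $\abs{h}_{Y_1}$ is defined, and since $h$ is an element of $H_1$ of $X$-length at most $n$, the definition of $\Dist_{(G,X)}^{(H_1,Y_1)}$ as a maximum over all such elements gives immediately $\abs{h}_{Y_1}\leq\Dist_{(G,X)}^{(H_1,Y_1)}(n)$. The second step now views $h$ as an element of $H_2\leq\langle Y_1\rangle$ whose $Y_1$-length is $\abs{h}_{Y_1}$; applying the definition of $\Dist_{(H_1,Y_1)}^{(H_2,Y_2)}$ at the argument $\abs{h}_{Y_1}$ yields $\abs{h}_{Y_2}\leq\Dist_{(H_1,Y_1)}^{(H_2,Y_2)}(\abs{h}_{Y_1})$.

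To combine the two inequalities I would invoke the monotonicity of distortion functions, which holds because enlarging the length bound only enlarges the set over which the defining maximum is taken. Applying the non-decreasing function $\Dist_{(H_1,Y_1)}^{(H_2,Y_2)}$ to the first inequality and chaining with the second produces $\abs{h}_{Y_2}\leq\Dist_{(H_1,Y_1)}^{(H_2,Y_2)}\bigl(\Dist_{(G,X)}^{(H_1,Y_1)}(n)\bigr)$. As $h$ was an arbitrary element of $H_2$ with $\abs{h}_X\leq n$, taking the maximum over all such $h$ gives the claimed inequality at $n$, and $n$ was arbitrary.

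I do not expect a genuine obstacle here, as the statement is essentially bookkeeping about which generating set measures which group. The only points that require care are confirming that each distortion function is well-defined on the relevant pair of groups, in particular that $H_2\leq H_1=\langle Y_1\rangle$ so that $\abs{h}_{Y_1}$ makes sense, and explicitly recording the monotonicity that lets the outer function be applied to an inequality rather than to an exact value.
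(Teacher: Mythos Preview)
Your proposal is correct and follows essentially the same argument as the paper: fix $n$ and $h\in H_2$ with $\abs{h}_X\leq n$, bound $\abs{h}_{Y_2}$ via $\abs{h}_{Y_1}$ using the two distortion functions, and invoke monotonicity to chain the inequalities. The only cosmetic difference is the order in which you state the two bounds; the paper writes the chain starting from $\abs{h}_{Y_2}$ and unfolding outward, whereas you first bound $\abs{h}_{Y_1}$ and then feed it in, but the content is identical.
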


It is well-known (cf.~\cite{Drutu2005}) that maximal parabolic subgroups of a finitely generated relatively hyperbolic group are undistorted, i.e.\ have a linear distortion function. In fact, a result by Osin~(\cite{Osin2006},~Lemma~5.4) states that, up to some computable constants, the distortion function of a peripheral subgroup is bounded by the relative Dehn function. This implies:

\begin{theorem}\label{thm:undistorted parabolics}
Let $G$ be a group which is finitely presented relative to some peripheral structure $\PP=\{P_1,\ldots,P_n\}$. Let $X$ and $X_i$ be finite generating sets for $G$ and each $P_i$ respectively.

Suppose the relative Dehn function of $(G,\PP)$ is computable.

Then $\Dist_{(G,X)}^{(P_i,X_i)}$ is computable.
\end{theorem}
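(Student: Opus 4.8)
The plan is to reduce everything to Osin's estimate (\cite{Osin2006}, Lemma~5.4), which bounds the parabolic word length of an element of $P_i$ by the area of a relative van Kampen diagram, and then to turn the resulting computable upper bound into an exact computation by a finite search.

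First I would make the upper bound effective. Fix $n$ and let $h\in P_i$ with $\abs{h}_X\le n$. Choosing a geodesic word $w\in X^*$ for $h$ and appending the single letter of $\mathcal{P}$ representing $h^{-1}$ produces a word over $X\cup\mathcal{P}$ of length at most $n+1$ that represents the trivial element. Since the relative Dehn function is computable, we obtain a constant $C$ such that this word is a product of at most $C(n+1)$ conjugates of relators from the finite set $R$. Osin's Lemma~5.4 reads off from such an expression a bound on $\abs{h}_{X_i}$: each relator is one of finitely many fixed words, so the $X_i$-length of every $P_i$-syllable occurring in $R$ can be computed outright, and summing these contributions over the at most $C(n+1)$ cells yields a linear bound $\Dist_{(G,X)}^{(P_i,X_i)}(n)\le f(n)$ with $f$ computable. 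The only non-elementary constant entering $f$ is the relative Dehn constant $C$, which is computable by hypothesis; all remaining quantities are determined by the finite data $R$, $X$, $X_i$.

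Having a computable upper bound is not yet the function itself, so second I would extract the exact values by a finite search. Any element realizing $\Dist_{(G,X)}^{(P_i,X_i)}(n)$ has an $X_i$-geodesic of length at most $f(n)$, so enumerating the finitely many words $v\in X_i^*$ with $\abs{v}\le f(n)$ and setting
\[
\Dist_{(G,X)}^{(P_i,X_i)}(n)=\max\{\,\abs{h_v}_{X_i}\mid \abs{v}\le f(n),\ \abs{h_v}_X\le n\,\}
\]
is correct, where $h_v\in P_i$ denotes the element represented by $v$: every term on the right is the $X_i$-length of some element of $P_i$ lying in the $X$-ball of radius $n$, and the maximizing element itself occurs, via its geodesic word, among the enumerated $v$.

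Evaluating the right-hand side requires, for each of the finitely many $v$, computing $\abs{h_v}_X$ and $\abs{h_v}_{X_i}$, which are bounded searches once the word problem of $G$ with respect to $X$ is available; the word problem of $P_i$ with respect to $X_i$ then reduces to that of $G$, since $P_i\hookrightarrow G$ is injective and $P_i$ is undistorted. The main obstacle is therefore twofold: verifying that Osin's estimate is genuinely effective, i.e.\ that no hidden non-computable constant is absorbed when passing from the diagram to the bound on $\abs{h}_{X_i}$, and ensuring the needed word problems are at hand. The latter is automatic in the intended setting, since computing the relative Dehn function in the first place (Theorem~\ref{thm:dehn+delta}) already presupposes solvable word problems in the $P_i$, from which the word problem of $G$ follows.
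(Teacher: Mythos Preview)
Your proposal is correct and matches the paper's approach: the paper gives no proof beyond citing Osin's Lemma~5.4 as bounding the distortion of $P_i$ by the relative Dehn function up to computable constants, which is precisely your first step. Your second step---upgrading the computable upper bound to the exact distortion via a finite search using the word problem of $G$---is a detail the paper leaves implicit, and your remark that the needed word problems are available in the intended setting (via Theorem~\ref{thm:dehn+delta}) is exactly right.
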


The final theorem in this section gives a connection between the computation of a distortion function and the solution to the corresponding membership problem. (cf.~Farb~\cite{Farb1994}, Proposition~2.1)

\begin{theorem}\label{thm:memb prob<->distortion}
Let $G$ be a group and $H\leq G$. Let $X$ be a finite generating set of $G$ and $Y\subseteq X^*$ finite, such that $H\leq\langle Y\rangle$.
Suppose a solution to the word problem of $G$ is given. Then
\begin{enumerate}
\item given a solution to the membership problem of $(G,H)$ it is possible to compute $\Dist_{(G,X)}^{(H,Y)}$, and

\item given an algorithm that computes $\Dist_{(G,X)}^{(H,Y)}$ it is possible to construct a solution to the membership problem of $(G,H)$.
\end{enumerate}
\begin{proof}Suppose a solution to the membership problem of $(G,H)$ is given. Let $n\in\N$.

Using the solution of the membership problem of $(G,H)$, it is possible to compute the set
\[
W:=\{w\in X^*\mid \abs{w}\leq n,\,w\in H\},
\]
which contains representatives of all elements $h\in H$ with $\abs{h}_X\leq n$.

With the solution to the word problem in $G$ it is now possible to determine the $Y$-length of all elements that are represented by a word in $W$.

The maximum of all these lengths is precisely $\Dist_{(G,X)}^{(H,Y)}(n)$.

Suppose now an algorithm computing $\Dist_{(G,X)}^{(H,Y)}$ is given. Let $w\in X^*$.

By definition of $\Dist_{(G,X)}^{(H,Y)}$, the element represented by $w$ lies in $H$, if and only if there is a word $w'\in Y^*$ representing this element with $\abs{w'}\leq\Dist_{(G,X)}^{(H,Y)}(\abs{w})$.

Since a solution to the word problem in $G$ is given, it is possible to decide for all $w'\in Y^*$ with $\abs{w'}\leq\Dist_{(G,X)}^{(H,Y)}(\abs{w})$, whether $\iota(w')w^{-1}$ represents the trivial element in $G$.
\end{proof}
\end{theorem}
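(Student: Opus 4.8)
The plan is to handle the two implications separately, in both cases using the word-problem oracle of $G$ as the basic primitive for deciding when two words name the same element.

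For the first direction I would compute $\Dist_{(G,X)}^{(H,Y)}(n)$ pointwise. Fixing $n$, the set of $X$-words of length at most $n$ is finite and can be enumerated, and applying the membership oracle for $(G,H)$ to each entry isolates a finite set $W$ of representatives covering every $h\in H$ with $\abs{h}_X\le n$. For each representative I then determine its $Y$-length by enumerating words $v\in Y^*$ in order of non-decreasing length and testing, via the word problem, whether $\iota(v)$ equals the chosen representative in $G$; the first match yields $\abs{h}_Y$, and this search halts because $h\in\langle Y\rangle$ guarantees that some $Y$-representative exists. The maximum of these $Y$-lengths over $W$ is by definition exactly $\Dist_{(G,X)}^{(H,Y)}(n)$. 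Note that this direction needs nothing beyond $H\le\langle Y\rangle$, since it only ever measures $Y$-lengths of elements already known to lie in $H$.

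For the second direction, given an input word $w\in X^*$ I would compute the bound $N:=\Dist_{(G,X)}^{(H,Y)}(\abs{w})$ from the supplied distortion algorithm and then use the word-problem oracle to search for some $v\in Y^*$ with $\abs{v}\le N$ and $\iota(v)w^{-1}=1$ in $G$, declaring $w\in H$ precisely when such a $v$ is found. The forward direction of the underlying equivalence is immediate from monotonicity: if $w\in H$ then $\abs{w}_Y\le\Dist_{(G,X)}^{(H,Y)}(\abs{w}_X)\le N$, so a $Y$-representative of length at most $N$ must occur and the bounded search will find it.

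The converse is where I expect the essential difficulty to sit, and it deserves to be isolated. Exhibiting a short $v\in Y^*$ with $\iota(v)=w$ only certifies $w\in\langle Y\rangle$, and since the distortion framework permits $H\lneq\langle Y\rangle$, this is strictly weaker than $w\in H$; already a rank-one example over $\Z$ (with $H=2\Z$ and $\langle Y\rangle=\Z$) shows that the short-representative test conflates membership in $H$ with membership in the larger group $\langle Y\rangle$. The clean resolution is to use the hypothesis, implicit in this direction, that $Y$ generates $H$ itself, so that $\langle Y\rangle=H$ and the two notions of membership coincide. Under $\langle Y\rangle=H$ the converse is trivial and the procedure is correct: finding a matching $v$ of length at most $N$ exhibits $w$ as an element of $\langle Y\rangle=H$, while its absence certifies $w\notin H$, since any $w\in H$ would contribute a representative within the bound. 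I would therefore make the equality $\langle Y\rangle=H$ explicit at exactly this point of the argument.
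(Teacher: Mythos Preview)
Your approach is essentially identical to the paper's in both directions: enumerate short $X$-words, filter by the membership oracle, compute $Y$-lengths via the word problem, take the maximum for part~1; for part~2, bound the search radius by $\Dist_{(G,X)}^{(H,Y)}(\abs{w})$ and look for a matching $Y$-word.

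Where you go beyond the paper is in isolating the converse of the equivalence in part~2, and here you have caught a genuine gap that the paper's own proof glosses over. The paper asserts flatly that ``the element represented by $w$ lies in $H$ if and only if there is a word $w'\in Y^*$ representing this element with $\abs{w'}\le\Dist_{(G,X)}^{(H,Y)}(\abs{w})$'', but the ``if'' direction is not justified when $H\lneq\langle Y\rangle$, and your observation that a short $Y$-representative only certifies membership in $\langle Y\rangle$ is exactly the point. A clean counterexample is $G=\Z^2$ with standard generators $X=\{e_1,e_2,e_1^{-1},e_2^{-1}\}$, $H=\langle e_1\rangle$, and $Y=\{e_1,e_2\}$: then $\Dist_{(G,X)}^{(H,Y)}(n)=n$, and for $w=e_2$ the search with bound $N=1$ finds $v=e_2\in Y^*$ of length~$1$ with $\iota(v)w^{-1}=1$, yet $e_2\notin H$. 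Your proposed fix---making $\langle Y\rangle=H$ explicit for direction~2---is the correct repair, and with it your argument is complete.
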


\section{Algorithm}\label{sec:Algorithm}

In this section the following is always assumed to be given:

$G$ is a group with symmetric finite generating set $X$ and $G$ is relatively hyperbolic with respect to a collection of subgroups $\PP=\{P_1,\ldots,P_n\}$. There is a finite relative presentation $\langle X,P_1,\ldots,P_n\mid R\rangle$ of $(G,\PP)$. Each $P_i$ has a finite presentation $\langle X_i\mid R_i\rangle$, where $X_i$ is given as a subset of $X^*$. $H$ is a subgroup of $G$ with finite generating set $Y\subseteq X^*$.

The goal of this section is to prove the main theorem by giving an explicit algorithm. The core idea of this algorithm will be to successively check for all possible peripheral structures $\OO$ of $H$ as in Theorem~\ref{thm:qc-embedding->rel qc}, whether the inclusion $(H,d_{Y\cup\mathcal{O}})\to(G,d_{X\cup\mathcal{P}})$ is a quasiisometric embedding. If such a structure $\OO$ exists, then by Theorem~\ref{thm:qc-embedding->rel qc}, $H$ is relatively quasiconvex in $G$.

However, as mentioned in Remark~\ref{rem:not induced}, this structure $\OO$ does not always need to be an induced structure. It turns out that to verify both that the above inclusion is a quasiisometric embedding as well as that $\OO$ is an induced structure it is helpful to check for the existence of elements as described in the following lemma. Their existence is an obstruction to $\OO$ being an induced structure.

\begin{lemma}\label{lem:enlarge parabolics}
Let $\OO=\{O_1,\ldots,O_m\}$ be a peripheral structure of $H$ consisting of subgroups $O_j\subseteq H\cap P_{i_j}^{g_j}$ for some $1\leq i_j\leq n$ and $g_j\in X^*$.

Suppose for $j,k\in\{1,\ldots,m\}$ with $i:=i_j=i_k$ there is $h\in H\cap g_j^{-1}P_ig_k$ with $h\notin O_j$ if $j=k$.

Then
\begin{itemize}
\item $O_j$ or $O_k$ is not maximal parabolic, or
\item $j\neq k$ and $O_j$ and $O_k$ are conjugated in $H$.
\end{itemize}
\begin{proof}
If $j=k$, this is obvious. In this case, $h\notin O_j$ and therefore $O_j$ is a proper subgroup of the parabolic group $\langle O_j,h\rangle\subseteq H\cap P_i^{g_j}$, i.e.\ $O_j$ is not maximal parabolic.

If on the other hand $j\neq k$ and $h=g_j^{-1}pg_k$ for some $p\in P_i$, then:
\[
O_j^h\subseteq(H\cap P_i^{g_j})^h=H^h\cap P_i^{pg_k}=H\cap P_i^{g_k}
\]

This implies that $\langle O_j^h,O_k\rangle$ is a subgroup of $H\cap P_i^{g_k}$. So either $O_j$ or $O_k$ are not maximal parabolic or $O_j^h=O_k$.
\end{proof}
\end{lemma}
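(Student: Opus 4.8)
The plan is to split along the two cases distinguished in the hypothesis and, in each, to use that a \emph{maximal} parabolic subgroup of $H$ must coincide with the full intersection $H\cap P_i^{g}$ in which it sits.

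First I would dispatch the diagonal case $j=k$. Here the hypothesis hands us an element $h\in H\cap P_i^{g_j}$ with $h\notin O_j$. Since $O_j\subseteq H\cap P_i^{g_j}$ and the ambient intersection $H\cap P_i^{g_j}$ is itself parabolic, the subgroup $\langle O_j,h\rangle$ is a parabolic subgroup of $H$ that strictly contains $O_j$. Hence $O_j$ cannot be maximal parabolic, which is exactly the first alternative in the conclusion.

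For the off-diagonal case $j\neq k$ I would write $h=g_j^{-1}pg_k$ with $p\in P_i$, as permitted by $h\in H\cap g_j^{-1}P_ig_k$, and conjugate $O_j$ by $h$. The crucial algebraic identity is $(P_i^{g_j})^h=P_i^{g_k}$: expanding gives $(g_jh)^{-1}P_i(g_jh)$, and since $g_jh=pg_k$ with $p\in P_i$ the conjugating element collapses to $g_k$. Together with $H^h=H$ (valid because $h\in H$) this yields $O_j^h\subseteq(H\cap P_i^{g_j})^h=H\cap P_i^{g_k}$, so that $O_j^h$ and $O_k$ both lie inside the single parabolic subgroup $H\cap P_i^{g_k}$.

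Finally I would run the dichotomy. If either $O_j$ or $O_k$ fails to be maximal parabolic, the first alternative holds and we are done. Otherwise both are maximal, so each equals its full intersection; in particular $O_j=H\cap P_i^{g_j}$ promotes the inclusion above to the equality $O_j^h=H\cap P_i^{g_k}=O_k$, exhibiting $O_j$ and $O_k$ as $H$-conjugate via $h$, which is the second alternative. I expect the only genuinely delicate point to be the conjugation bookkeeping behind $(P_i^{g_j})^h=P_i^{g_k}$; everything else is a direct unwinding of the definitions of parabolic and maximal parabolic subgroup.
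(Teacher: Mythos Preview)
Your proof is correct and follows essentially the same route as the paper's: the same case split on $j=k$ versus $j\neq k$, the same conjugation computation $O_j^h\subseteq(H\cap P_i^{g_j})^h=H\cap P_i^{g_k}$, and the same final dichotomy. The only cosmetic difference is that the paper phrases the last step via the subgroup $\langle O_j^h,O_k\rangle\subseteq H\cap P_i^{g_k}$ rather than invoking ``each equals its full intersection'' directly, but the content is identical.
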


To verify that the inclusion $(H,d_{Y\cup\mathcal{O}})\to(G,d_{X\cup\mathcal{P}})$ is a quasiisometric embedding it is sufficient to verify that all geodesic words $w\in(Y\cup\mathcal{O})^*$ are mapped to uniformly quasigeodesic words $\iota(w)\in(X\cup\mathcal{P})^*$ under the canonical map $\iota$. Since $(Y\cup\mathcal{O})^*$ contains words of arbitrary length over an infinite alphabet, it is not immediately clear how to verify this property algorithmically.

The reduction to words of bounded length is easy due to the well-known property of hyperbolic spaces that for appropriate constants, local quasigeodesics are quasigeodesics. Hence, it suffices to show for appropriate constants $L$, $\lambda$ and $c$ that geodesic words of length at most $L$ are mapped to $(\lambda,c)$-quasigeodesic words.

The problem of the infinite alphabet can be handled by an application of Theorem~\ref{thm:bcp}. It implies, that for any two quasigeodesic words in $(X\cup\mathcal{P})^*$ representing the same element and without backtracking every long $P_i$-component of one of the quasigeodesics has to be connected to a $P_i$-component of the other one. This means that every sufficiently long $P_i$-component of the image $\iota(w)$ of some word $w\in(Y\cup\mathcal{O})^*$ is connected to a $P_i$-component of any geodesic word in $(X\cup\mathcal{P})^*$ representing the same element, provided that $\iota(w)$ is without backtracking. It is therefore enough to verify that all geodesic words in $(Y\cup\mathcal{O})^*$ with short $O_j$-components are mapped to uniformly quasigeodesic words.

This is made more precise and proved in Lemma~\ref{lem:2}, but the proof requires a way to ensure that the canonical map $\iota$ maps geodesic words to words without backtracking. It turns out that this backtracking can only occur, if there are short words as described in Lemma~\ref{lem:enlarge parabolics}.

\begin{lemma}\label{lem:1}
Let $\OO=\{O_1,\ldots,O_m\}$ be a peripheral structure of $H$ consisting of subgroups $O_j\subseteq H\cap P_{i_j}^{g_j}$ for some $1\leq i_j\leq n$ and $g_j\in X^*$.

Let $L,\lambda\geq1$, $c\geq0$ and $\mu:=\max(\{\abs{y}\mid y\in Y\}\cup\{2\abs{g_j}+1\mid1\leq j\leq m\})$. Let $\varepsilon=\varepsilon(\lambda,c+\mu)$ be as in the conclusion of Theorem~\ref{thm:bcp} and
\[
D\geq\max_{1\leq j\leq m}\left(\Dist_{(G,X)}^{(O_j,Y)}(\varepsilon+2\abs{g_j})\right).
\]

Suppose that
\begin{enumerate}
\item for any geodesic word $w\in(Y\cup\mathcal{O})^*$ with $\abs{w}\leq L$, for which $\iota(w)$ is without backtracking, $\iota(w)$ is a $(\lambda,c)$-quasi-geodesic word in $(X\cup\mathcal{P})^*$, and

\item there is some geodesic word $w\in(Y\cup\mathcal{O})^*$ with $\abs{w}\leq L$ such that $\iota(w)$ has backtracking.
\end{enumerate}
Then there is $h\in H\cap g_j^{-1}P_ig_k$ with $i=i_j=i_k$, $h\notin O_j$ if $j=k$, and $\abs{h}_Y\leq D\lambda(\mu+c)$

\begin{proof}
Let $w\in(Y\cup\mathcal{O})^*$ be a geodesic word with $\abs{w}\leq L$. Suppose the path $v$ in $\Gamma(G,X\cup\mathcal{P})$ starting at $1$ and labeled by $\iota(w)$ has backtracking. Then there is a subword $o_1ho_2$ of $w$ with $h\in (Y\cup\mathcal{O})^*$, $o_1\in O_j$, $o_2\in O_k$ and $i:=i_j=i_k$ such that the $P_i$-components $p_1$ and $p_2$ of the subword
\[
\iota(o_1ho_2)=g_j^{-1}p_1g_j\,\iota(h)\,g_k^{-1}p_2g_k
\]
of $\iota(w)$ are connected and that $\iota(h)$ is without backtracking and has no $P_i$-component connected to $p_1$ and $p_2$. If $j=k$, then $h\notin O_j$ since $w$ is geodesic and therefore without backtracking.

Since $p_1$ and $p_2$ are connected, $g_jhg_k^{-1}$ represents some element $p\in P_i$. As a subword of $w$, $h$ is geodesic with $\abs{h}\leq L$ and since $\iota(h)$ is without backtracking, it is $(\lambda,c)$-quasigeodesic by assumption 1. Hence:
\[
\abs{h}_{Y\cup\mathcal{O}}\leq\abs{\iota(h)}\leq \lambda\abs{h}_{X\cup\mathcal{P}}+\lambda c=\lambda\abs{g_j^{-1}pg_k}_{X\cup\mathcal{P}}+\lambda c\leq \lambda(\mu+c)
\]
Now $p^{-1}g_j\iota(h)g_k^{-1}$ is a $(\lambda,c+\mu)$-quasigeodesic without backtracking and representing the trivial element. The choice of $\varepsilon$ implies that all of its $\PP$-components must have a $d_X$-length of at most $\varepsilon$. Since every $O_j$-component $o$ of $h$ is mapped under $\iota$ to $g_j^{-1}pg_j$ for some $P_{i_j}$-component $p$ of $\iota(h)$, it follows that $\abs{o}_X\leq\varepsilon+2\abs{g_j}$. The choice of $D$ then implies $\abs{o}_Y\leq D$, which in turn implies
\[
\abs{h}_Y\leq D\lambda(\mu+c).
\]
\end{proof}
\end{lemma}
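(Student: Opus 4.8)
The plan is to read off the element $h$ from the backtracking supplied by assumption (2). Every $\mathcal P$-component of the image of a word in $(Y\cup\mathcal O)^*$ arises from a single $\mathcal O$-letter $o\in O_{j'}$, whose image is $g_{j'}^{-1}p'g_{j'}$, so backtracking of $\iota(w)$ means that two such letters, contributing $P_i$-components $p_1,p_2$ with $i=i_j=i_k$, have connected images. Among all connected pairs I would choose one minimizing the length of the intervening subword and write the corresponding part of $w$ as $o_1ho_2$ with $o_1\in O_j$, $o_2\in O_k$; this \emph{innermost} choice guarantees that $\iota(h)$ is without backtracking and has no $P_i$-component connected to $p_1$ or $p_2$, since otherwise a strictly shorter connected pair would exist. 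When $j=k$, geodesicity of $w$ forces $h\notin O_j$, as $o_1ho_2$ would otherwise collapse to a single $\mathcal O$-letter. Because $p_1,p_2$ are connected, $g_jhg_k^{-1}$ represents some $p\in P_i$, whence $h=g_j^{-1}pg_k\in H\cap g_j^{-1}P_ig_k$ with $\abs{h}_{X\cup\mathcal P}\le\mu$; and since $h$ is a geodesic subword of $w$ of length at most $L$ with $\iota(h)$ without backtracking, assumption (1) applies and gives $\abs{h}_{Y\cup\mathcal O}\le\abs{\iota(h)}\le\lambda\abs{h}_{X\cup\mathcal P}+\lambda c\le\lambda(\mu+c)$.

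The crux is converting this into a bound on $\abs{h}_Y$, for which I would control each $\mathcal O$-letter of $h$ separately. I would form the loop $p^{-1}g_j\iota(h)g_k^{-1}$, which represents the trivial element; prepending and appending these at most $\mu$ letters turns the $(\lambda,c)$-quasigeodesic $\iota(h)$ into a $(\lambda,c+\mu)$-quasigeodesic, and since the added component $p^{-1}$ is connected to no component of $\iota(h)$ (its connectivity coincides with that of $p_1$, already excluded), the loop is still without backtracking. Comparing the loop against the empty path via Theorem~\ref{thm:bcp}(ii)—which, having no components, admits no connection—forces every $\mathcal P$-component $s$ of the loop to satisfy $d_X(s^-,s^+)\le\varepsilon$. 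Each $\mathcal O$-letter $o\in O_{j'}$ of $h$ maps to $g_{j'}^{-1}p'g_{j'}$ for such a component $p'$, so $\abs{o}_X\le\varepsilon+2\abs{g_{j'}}$, and by the choice of $D$ together with the definition of the distortion function this yields $\abs{o}_Y\le D$. Replacing each of the at most $\lambda(\mu+c)$ letters of $h$ by a $Y$-word of length at most $D$ then produces $\abs{h}_Y\le D\lambda(\mu+c)$.

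The main obstacle I anticipate is the bookkeeping around backtracking: making the innermost-pair selection rigorous so that both $\iota(h)$ and the loop $p^{-1}g_j\iota(h)g_k^{-1}$ are genuinely without backtracking, and checking that prepending $p^{-1}g_j$ introduces no new connection to a component of $\iota(h)$. Once these points are settled, the quasigeodesic estimate from assumption (1) and the passage from $\abs{h}_{Y\cup\mathcal O}$ to $\abs{h}_Y$ through Theorem~\ref{thm:bcp} and the distortion bound $D$ are routine.
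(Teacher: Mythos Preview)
Your proposal is correct and follows essentially the same route as the paper's proof: isolate an innermost pair of connected $P_i$-components coming from $\mathcal O$-letters $o_1\in O_j$, $o_2\in O_k$, apply assumption~(1) to the intervening subword $h$ to bound $\abs{h}_{Y\cup\mathcal O}$, then use the $(\lambda,c+\mu)$-quasigeodesic loop $p^{-1}g_j\iota(h)g_k^{-1}$ together with Theorem~\ref{thm:bcp} to bound each $\mathcal O$-letter of $h$ in $Y$-length by $D$. Your write-up is in fact slightly more explicit than the paper's on two points the paper leaves implicit---why the innermost choice forces $\iota(h)$ to be without backtracking, and why prepending $p^{-1}g_j$ creates no new connections---so the ``main obstacle'' you anticipate is already adequately handled by your own argument.
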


It is now possible to prove the following lemma, which states that under the right conditions it suffices to check whether short geodesic words with short $\OO$-components in $(Y\cup\mathcal{O})^*$ are mapped to quasigeodesic words in $(X\cup\mathcal{P})^*$ to verify that the same is true for short geodesic words with arbitrarily large $\OO$-components.

\begin{lemma}\label{lem:2}
Let $\OO=\{O_1,\ldots,O_m\}$ be a peripheral structure of $H$ consisting of subgroups $O_j\subseteq H\cap P_{i_j}^{g_j}$ for some $1\leq i_j\leq n$ and $g_j\in X^*$.

Let $\mu:=\max(\{\abs{y}\mid y\in Y\}\cup\{2\abs{g_j}+1\mid1\leq j\leq m\})$, $N\geq2(\mu+1)^2$ and $L\geq1$. Let $\varepsilon=\varepsilon(N,N+\mu)$ be as in the conclusion of Theorem~\ref{thm:bcp}, $C:=\sqrt{\frac{N}{2}}-\mu$ and
\[
D\geq\max_{1\leq j\leq m}\left(\Dist_{(G,X)}^{(O_j,Y)}(\varepsilon+2\abs{g_j})\right).
\]

Suppose that
\begin{enumerate}
\item every geodesic word $w\in(Y\cup\mathcal{O})^*$ with $\abs{w}\leq L$ and $\abs{o}_Y\leq D$ for every letter $o\in\mathcal{O}$ of $w$ is mapped to a $(C,C)$-quasigeodesic word in $(X\cup\mathcal{P})^*$ under $\iota$, and

\item there is no $h\in H\cap g_j^{-1}P_ig_k$ with $i=i_j=i_k$, $h\notin O_j$ if $j=k$, and $\abs{h}_Y\leq DN(\mu+N)$.
\end{enumerate} 
Then for every geodesic $w\in(Y\cup\mathcal{O})^*$ with $\abs{w}\leq L$ the word $\iota(w)$ is a $(N,N)$-quasigeodesic word in $(X\cup\mathcal{P})^*$.

\begin{proof}
Let $w\in(Y\cup\mathcal{O})^*$ geodesic with $\abs{w}\leq L$ and assume that the path $v'$ in $\Gamma(G,X\cup\mathcal{P})$ starting at $1$ and labeled by $\iota(w)$ is without backtracking.

Let $o\in O_j$ be a letter of $w$ with $\abs{o}_Y\geq D$. Hence, $o=g_j^{-1}pg_j$ for some $p\in P_{i_j}$. It follows that
\[
\Dist_{(G,X)}^{(O_j,Y)}(\abs{o}_X)\geq\abs{o}_Y\geq D\geq\Dist_{(G,X)}^{(O_j,Y)}(\varepsilon+2\abs{g_j}).
\]
This implies that $\abs{p}_X\geq\abs{o}_X-2\abs{g_j}\geq\varepsilon$.

Since $\iota(w)$ is without backtracking, it follows from Theorem \ref{thm:bcp} that a geodesic $v$ in $\Gamma(G,X\cup\mathcal{P})$ from $1$ to $w$ must have a $P_{i_j}$-component connected to the $P_{i_j}$-component $p$ of $\iota(w)$.

If $w$ has at least $\frac{\abs{\iota(w)}}{N}-(N-2\mu)$ of such long $\OO$-components, i.e.\ $\OO$-components of $d_X$-length at least $\varepsilon+2\abs{g_j}$, then $v$ must have at least the same number of $\PP$-components and therefore:
\[
\abs{w}_{X\cup\mathcal{P}}=\abs{v}\geq\frac{\abs{\iota(w)}}{N}-(N-2\mu)
\]

Suppose now that $w$ has $k\leq\frac{\abs{\iota(w)}}{N}-(N-2\mu)$ long $\OO$-components. Then $w$ can be written as $w_1o_1w_2o_2\ldots o_kw_{k+1}$, where the $o_l$ are the long $\OO$-components of $w$ and $w_1,\ldots,w_{k+1}\in(Y\cup\mathcal{O})^*$ are the (possibly empty) subwords of $w$ between them. Each $o_l$ is mapped under $\iota$ to some word $g_{j_l}^{-1}p_lg_{j_l}$, where $p_l\in P_{i_{j_l}}$ is a $\PP$-component of $\iota(w)$. $v$ must then have $\PP$-components $q_1,\ldots,q_k$, where each $q_l$ is connected to the $\PP$-component $p_l$ of $\iota(w)$. Using Theorem~\ref{thm:bcp} and the choice of $\varepsilon$, it is easy to show that $q_1,\ldots,q_k$ appear in $v$ in order, i.e.\ that $v$ is of the form $v_1q_1v_2q_2\ldots q_kv_{k+1}$. This implies that for each $l\in\{1,\ldots,k+1\}$
\[
\abs{v_l}_{x\cup\mathcal{P}}\geq\abs{w_l}_{X\cup\mathcal{P}}-2(\max\{\abs{g_j}\mid1\leq j\leq m\}+1).
\]

Since the $w_l$ are shorter than $w$ and contain no long $\OO$-components, their images $\iota(w_l)$ are $(C,C)$-quasigeodesic by assumption 1. It follows that:
\begin{align*}
\abs{w}_{X\cup\mathcal{P}} &= \abs{v}=\sum_{l=1}^{k+1}\abs{v_l}_{X\cup\mathcal{P}}+k\\
&\geq\sum_{l=1}^{k+1}\abs{w_l}_{X\cup\mathcal{P}}-2k(\max\{\abs{g_j}\mid1\leq j\leq m\}+1)+k\\
&\geq \sum_{l=1}^{k+1}\left(\frac{1}{C}\abs{\iota(w_l)}-C\right)-k\mu\\
&= \frac{1}{C}\left(k\mu+\sum_{l=1}^{k+1}\abs{\iota(w_l)}\right)-\frac{k\mu}{C}-(k+1)C-k\mu\\
&\geq \frac{1}{C}\abs{\iota(w)}-\frac{k\mu}{C}-(k+1)C-k\mu\\
&= \frac{1}{C}\abs{\iota(w)}-\left(C+k\left(C+\left(1+\frac{1}{C}\right)\mu\right)\right)\\
&\geq \frac{1}{C}\abs{\iota(w)}-\left(C+\left(\frac{\abs{\iota(w)}}{N}-(N-2\mu)\right)(C+2\mu)\right)\\
&= \left(\frac{1}{C}-\frac{1}{N}(C+2\mu)\right)\abs{\iota(w)}-\left(C-(N-2\mu)(C+2\mu)\right)
\end{align*}

Since $C\leq N-2\mu$ and $2C(C+\mu)\leq N$, it follows again that:
\[
\abs{w}_{X\cup\mathcal{P}}\geq\frac{\abs{\iota(w)}}{N}-(N-2\mu)
\]

Now let $w'$ be a non-trivial subword of $\iota(w)$. Let $s$ be minimal, such that $w'$ is a subword of $\iota(w_r\ldots w_{r+s})$ for some $r\in\{1,\ldots,k+1\}$. It follows that:
\[
\abs{w'}_{X\cup\mathcal{P}}\geq\abs{\iota(w_r\ldots w_{r+s})}_{X\cup\mathcal{P}}-2\mu\geq\frac{\abs{\iota(w_r\ldots w_{r+s})}}{N}-N\geq\frac{\abs{w'}}{N}-N
\]
Hence, $\iota(w)$ is $(N,N)$-quasigeo\-desic.

\medskip
Suppose now that $w\in(Y\cup\mathcal{O})^*$ is any geodesic word with $\abs{w}\leq L$. It follows by Lemma~\ref{lem:1} from assumption 2.\ and the above that $\iota(w)$ is without backtracking. This in turn implies that $\iota(w)$ is itself $(N,N)$-quasigeodesic.
\end{proof}
\end{lemma}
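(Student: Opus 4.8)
The plan is to prove the statement in two stages: first for geodesic words $w$ (with $\abs{w}\le L$) whose image $\iota(w)$ is without backtracking, and then to use Lemma~\ref{lem:1} to show that backtracking cannot occur at all, so that the first stage in fact covers every such $w$.

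For the no-backtracking stage I would begin by isolating the \emph{long} $\OO$-components of $w$, namely those letters $o\in O_j$ with $\abs{o}_Y\ge D$. The defining inequality for $D$ together with monotonicity of the distortion function forces the $\PP$-component $p$ in the image $\iota(o)=g_j^{-1}pg_j$ to satisfy $\abs{p}_X\ge\varepsilon$. Comparing $\iota(w)$ with a relative geodesic $v$ from $1$ to $w$ and invoking Theorem~\ref{thm:bcp}(ii), each such long component is connected to a $\PP$-component of $v$, while part (iii) controls the endpoints of connected components. I would then split $w$ at its long components as $w=w_1o_1\cdots o_kw_{k+1}$, so that each intermediate piece $w_l$ is a geodesic subword of length $\le L$ whose $\OO$-components are all short; assumption~1 then makes every $\iota(w_l)$ a $(C,C)$-quasigeodesic.

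The core is a length estimate for $\abs{w}_{X\cup\mathcal{P}}=\abs{v}$. If $w$ carries at least $\frac{\abs{\iota(w)}}{N}-(N-2\mu)$ long components, then $v$ has at least that many $\PP$-components and the desired bound $\abs{w}_{X\cup\mathcal{P}}\ge\frac{\abs{\iota(w)}}{N}-(N-2\mu)$ is immediate. Otherwise I would show, using Theorem~\ref{thm:bcp}(iii), that the matched $\PP$-components $q_1,\ldots,q_k$ of $v$ occur in the same order as $p_1,\ldots,p_k$, so $v=v_1q_1\cdots q_kv_{k+1}$ with each $v_l$ shadowing $\iota(w_l)$ up to a bounded endpoint error. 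Summing the $(C,C)$-quasigeodesic bounds for the pieces $\iota(w_l)$ and accounting for the $k$ edges contributed by the $q_l$ yields a lower bound of the shape $\left(\frac{1}{C}-\frac{C+2\mu}{N}\right)\abs{\iota(w)}$ minus an additive constant, and the choices $C=\sqrt{N/2}-\mu$ and $N\ge2(\mu+1)^2$, equivalently $C\le N-2\mu$ and $2C(C+\mu)\le N$, are exactly what collapse this to $\frac{\abs{\iota(w)}}{N}-(N-2\mu)$. Running the same comparison on arbitrary subwords of $\iota(w)$ then upgrades the estimate to the full $(N,N)$-quasigeodesic inequality.

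To close the argument I would invoke Lemma~\ref{lem:1} with $\lambda=N$ and $c=N$: under this choice its $\varepsilon$ and $D$ coincide with ours, and the no-backtracking stage verifies its hypothesis~(1); if some geodesic $w$ with $\abs{w}\le L$ had $\iota(w)$ with backtracking, Lemma~\ref{lem:1} would produce an element $h\in H\cap g_j^{-1}P_ig_k$ with $\abs{h}_Y\le DN(\mu+N)$, contradicting assumption~2. Hence no geodesic image backtracks and the first stage applies universally. I expect the main obstacle to be the constant bookkeeping in the core estimate: pinning down the ordering of the matched components, tracking the additive endpoint errors contributed at each of the $k$ junctions, and arranging the telescoping sum so that it simplifies under the two tight relations $C\le N-2\mu$ and $2C(C+\mu)\le N$ rather than degrading into a $k$-dependent bound. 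Legitimising, through the BCP property, the comparison of the whole image $\iota(w)$ with the relative geodesic $v$ is the other point that needs care.
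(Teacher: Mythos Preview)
Your proposal is correct and follows essentially the same approach as the paper's own proof: the two-stage structure (first treat the no-backtracking case, then eliminate backtracking via Lemma~\ref{lem:1} with $\lambda=c=N$), the isolation of long $\OO$-components and their matching to $\PP$-components of a relative geodesic $v$ through Theorem~\ref{thm:bcp}, the case split on the number $k$ of long components, the decomposition $w=w_1o_1\cdots o_kw_{k+1}$ with the $(C,C)$-bound applied to each $\iota(w_l)$, and the summation that collapses under $C\le N-2\mu$ and $2C(C+\mu)\le N$ all mirror the paper exactly. Your identification of the two delicate points (the constant bookkeeping in the telescoping sum and the legitimacy of invoking BCP on $\iota(w)$ before its quasigeodesicity is established) is apt; the paper handles these in the same way you outline and does not offer a materially different resolution.
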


The rest of this section is dedicated to the proof of the main theorem:

\setcounter{section}{1}
\setcounter{subsection}{0}
\begin{theorem}\label{thm:main thm}
Suppose that for all $i\in\{1,\ldots,n\}$ the following are given:
\begin{enumerate}
\item a solution to the membership problem of $(P_i,O)$ for every finitely generated subgroup $O\leq P_i$ and
\item an algorithm, which decides whether a given finitely generated subgroup of some $P_i$ is finite.
\end{enumerate}

Then there is a semi-algorithm which stops, if and only if $H$ is a relatively quasiconvex subgroup of $G$, and if it stops, it returns
\begin{itemize}
\item generating sets of an induced structure $\OO$ on $H$, and
\item $\lambda\geq1,c\geq0$, such that the inclusion $(H,d_{Y\cup\mathcal{O}})\to(G,d_{X\cup\mathcal{P}})$ is a $(\lambda,c)$-quasiisometric embedding.
\end{itemize}
\end{theorem}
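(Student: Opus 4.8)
The plan is to build the semi-algorithm as a search over candidate induced structures $\OO$, using Lemmas~\ref{lem:1} and~\ref{lem:2} to make the verification of the quasiisometric embedding property effective, and using Lemma~\ref{lem:enlarge parabolics} together with assumption~2 to certify that a successful candidate is actually an induced structure. The overall engine is a dovetailed (diagonal) enumeration: I would enumerate over all finite tuples of parameters consisting of candidate peripheral subgroups $O_j$, the indices $i_j$ and conjugators $g_j\in X^*$, and candidate quasigeodesic constants. For each candidate I run a terminating check; the semi-algorithm halts as soon as one candidate passes, and provably never halts otherwise.

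\medskip
\textbf{Enumerating candidate structures.} Each candidate $\OO=\{O_1,\dots,O_m\}$ is specified by finitely many $g_j\in X^*$, indices $i_j$, and finite generating sets for the $O_j$ given as subsets of $(Y\cup\mathcal{P})^*$ (equivalently words representing elements of $H\cap P_{i_j}^{g_j}$). There are countably many such specifications, so I dovetail over them. The key point enabling effectivity of each candidate check is that, by Theorem~\ref{thm:undistorted parabolics} and Lemma~\ref{lem:distortion facts}(2), the distortion $\Dist_{(G,X)}^{(P_{i_j}^{g_j},\,X_{i_j}^{g_j})}$ is computable, and given the membership solution for $(P_{i_j},O)$ (assumption~1) together with Theorem~\ref{thm:memb prob<->distortion} I can compute $\Dist_{(G,X)}^{(O_j,Y)}$; this is exactly the quantity needed to evaluate the bound $D$ appearing in Lemmas~\ref{lem:1} and~\ref{lem:2}.

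\medskip
\textbf{Verifying a candidate.} For a fixed candidate and fixed constants $N\geq 2(\mu+1)^2$ and $L$, I compute $\mu$, then $\varepsilon=\varepsilon(N,N+\mu)$ via Theorem~\ref{thm:bcp} (applicable since each $P_i$ is finitely presented with solvable word problem, so Theorem~\ref{thm:dehn+delta} gives the linear bound and $\delta$), then $C=\sqrt{N/2}-\mu$ and $D$. I then discharge the two hypotheses of Lemma~\ref{lem:2}: hypothesis~1 is a check over the \emph{finitely many} geodesic words $w\in(Y\cup\mathcal{O})^*$ with $\abs{w}\leq L$ all of whose $\OO$-letters have $Y$-length $\leq D$, testing that each $\iota(w)$ is $(C,C)$-quasigeodesic, which is decidable using the word problem in $G$ and computability of $d_{X\cup\mathcal{P}}$; hypothesis~2 is the \emph{absence} of a short element $h\in H\cap g_j^{-1}P_ig_k$ with $\abs{h}_Y\leq DN(\mu+N)$, again a finite search using the membership solutions. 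If both pass, Lemma~\ref{lem:2} yields that every geodesic $w$ with $\abs{w}\leq L$ maps to an $(N,N)$-quasigeodesic, and the standard ``local geodesics are global'' principle in the $\delta$-hyperbolic space $\Gamma(G,X\cup\mathcal{P})$ (with $L$ chosen large relative to $N$ and $\delta$) upgrades this to: the inclusion $(H,d_{Y\cup\mathcal{O}})\to(G,d_{X\cup\mathcal{P}})$ is a $(\lambda,c)$-quasiisometric embedding for computable $\lambda,c$. By Theorem~\ref{thm:qc-embedding->rel qc}, $H$ is then relatively quasiconvex.

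\medskip
\textbf{Correctness of termination and the induced-structure output.} I must show the search halts iff $H$ is relatively quasiconvex, and that the output $\OO$ is genuinely an induced structure. If $H$ is relatively quasiconvex, Theorem~\&~Definition~\ref{thm+def:induced structure} supplies an induced structure $\OO_0$ (representatives of $H\cap P^g$ with $\abs{H\cap P^g}=\infty$) for which the inclusion is a quasiisometric embedding; this $\OO_0$, with suitable constants, is eventually reached by the enumeration and passes all checks, so the search halts. Conversely, if the search halts it produces a certified quasiisometric embedding, giving relative quasiconvexity. The \textbf{main obstacle} is guaranteeing that a candidate $\OO$ which passes the Lemma~\ref{lem:2} checks is an induced structure and not merely a ``too-large'' structure as in Remark~\ref{rem:not induced}: here hypothesis~2 of Lemma~\ref{lem:2} is exactly the obstruction from Lemma~\ref{lem:enlarge parabolics}, ruling out non-maximal-parabolic $O_j$ (the $j=k$ case) and redundant conjugate pairs (the $j\neq k$ case); combined with assumption~2, used to discard any $O_j$ that is finite so that only the infinite intersections $H\cap P^g$ survive, this forces $\OO$ to consist of representatives of the distinct infinite conjugacy classes, matching $\bar{\OO}$. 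Making this maximality-and-malnormality argument fully rigorous — in particular verifying that passing the finite checks for a \emph{single} pair $(N,L)$ certifies the induced-structure property for the whole group — is the delicate step that the proof must carry out carefully.
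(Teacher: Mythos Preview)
Your overall architecture matches the paper's: enumerate candidate structures $\OO$, use Lemmas~\ref{lem:1} and~\ref{lem:2} together with the local-to-global principle in the $\delta$-hyperbolic graph $\Gamma(G,X\cup\mathcal{P})$ to certify the quasiisometric embedding via finite checks, then argue that the successful candidate is an induced structure. However, your final paragraph contains a genuine gap. You assert that hypothesis~2 of Lemma~\ref{lem:2} ``rul[es] out non-maximal-parabolic $O_j$,'' but that hypothesis only excludes witnesses $h\in H\cap g_j^{-1}P_ig_k$ with $\abs{h}_Y\leq DN(\mu+N)$; it says nothing about longer $h$. The paper closes this by a bootstrap: once the \emph{global} $(\lambda/\mu,c)$-quasigeodesic property of $\iota$ is established, it reapplies Lemma~\ref{lem:1} with unbounded $L$ and constants $(\lambda/\mu,c)$, so the relevant bound becomes $D(\lambda/\mu)(\mu+c)$. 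This is why the paper's step~3 searches for $h$ up to $\max\{DN(\mu+N),\,D(\lambda/\mu)(\mu+c)\}$, not just the first quantity. The absence of any such $h$ then forces every geodesic image to be without backtracking, and only from that does $O_j=H\cap P_i^{g_j}$ follow.

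Separately, your argument does not address \emph{completeness} of $\OO$: you must show that every infinite $H\cap P_i^g$ is $H$-conjugate into some $O_j$, not merely that the $O_j$ you have are maximal and pairwise non-conjugate. Lemma~\ref{lem:enlarge parabolics} and assumption~2 give you nothing here, since an infinite $H\cap P_i^g$ with $g$ unrelated to any $g_j$ produces no witness $h$ of the form in hypothesis~2. The paper handles this by a different argument: the quasiisometric embedding traps all of $H\cap P_i^g$ in a ball of radius $\lambda(2\abs{g}+1)+\lambda c$ in $\Gamma(H,Y\cup\mathcal{O})$, so infiniteness forces some geodesic representative to contain a long $\OO$-component, and Theorem~\ref{thm:bcp} then connects that component to the $P_i$-coset $P_i^g$, exhibiting the required $H$-conjugacy. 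You should incorporate both of these steps explicitly.
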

\setcounter{section}{4}
\setcounter{subsection}{3}

We will first prove an auxiliary lemma:
\begin{lemma}
In the situation of Theorem~\ref{thm:main thm}, the following operations can be executed in finite time:
\begin{enumerate}
\item[(O1)] For $g\in G$, finitely generated $O\leq H\cap P_i^g$ and $n\in\N$ compute an upper bound for $\Dist_{(G,X)}^{(O,Y)}(n)$.

\item[(O2)] For $w\in X^*$ and finitely generated $O\leq H\cap P_i^g$ decide, whether $w$ represents an element in $O$.

\item[(O3)] For $C\geq1$ and $w\in(X\cup\mathcal{P})^*$ decide, whether $w$ is a $(C,C)$-quasigeodesic word.
\end{enumerate}

\begin{proof} By assumption 1., a solution to the word problem in each $P_i$ is given. By Zhang (\cite{Zhang2018},~Thm.~4.1.12) it is therefore possible to construct a solution to the word problem in $G$.

\begin{enumerate}
\item[(O1)] Let $g\in G$, $O\leq H\cap P_i^g$ with finite generating set $S\subseteq Y^*$ and $k\in\N$.

By Theorem~\ref{thm:dehn+delta} an upper bound for the relative Dehn function of $(G,\PP)$ can be computed. It follows from Theorem~\ref{thm:undistorted parabolics}, that it is possible to compute $\Dist_{(G,X)}^{(P_i,X_i)}$. Since a solution to the membership problem of $(P_i,O^{g^{-1}})$ is given, it is by Theorem~\ref{thm:memb prob<->distortion} possible to construct an algorithm that computes $\Dist_{(P_i,X_i)}^{(O^{g^{-1}},S^{g^{-1}})}$. So by Lemma~\ref{lem:distortion facts} and Lemma~\ref{lem:chained distortion}:
\begin{align*}
\Dist_{(G,X)}^{(O,Y)}(k) &\leq \max\{\abs{s}_Y\mid s\in S\}\cdot\Dist_{(G,X)}^{(O,S)}(k)\\
&\leq \max\{\abs{s}_Y\mid s\in S\}\cdot\Dist_{(G,X)}^{(O^{g^{-1}},S^{g^{-1}})}(k+2\abs{g})\\
&\leq \max\{\abs{s}_Y\mid s\in S\}\cdot\left(\Dist_{(P_i,X_i)}^{(O^{g^{-1}},S^{g^{-1}})}\circ\Dist_{(G,X)}^{(P_i,X_i)}\right)(k+2\abs{g})
\end{align*}
Therefore, an upper bound for $\Dist_{(G,X)}^{(O,Y)}(k)$ can be computed.

\item[(O2)] As explained above, it is possible to construct a solution to the word problem in $G$. Since $\Dist_{(G,X)}^{(O,Y)}$ is computable, it is by Theorem~\ref{thm:memb prob<->distortion} possible to construct a solution to the membership problem for $(G,O)$.

\item[(O3)] It suffices to show that it is possible for any $w\in(X\cup\mathcal{P})^*$ to compute $\abs{w}_{X\cup\mathcal{P}}$.

Since $\Dist_{(G,X)}^{(P_i,X_i)}$ can be computed for all $i\in\{1,\ldots,n\}$, the membership problem of $(G,P_i)$ is solvable by Theorem~\ref{thm:memb prob<->distortion}. Hence, it is possible to construct a word $w'\in(X\cup\mathcal{P})^*$ representing the same element as $w$ which is without backtracking.

Let $\varepsilon(\abs{w'},\abs{w'})$ be as in the conclusion of Theorem~\ref{thm:bcp} and $v\in(X\cup\mathcal{P})^*$ a geodesic word representing the same element as $w$. Since $v$ and $w'$ are obviously $(\abs{w'},\abs{w'})$-quasigeodesic, it follows from Theorem~\ref{thm:bcp} that the $\abs{\cdot}_X$-length of the $\PP$-components of $v$ is bounded by:
\[
\max\{\abs{p}_X\mid p\text{ is a }\PP\text{-component of }w'\}+2\varepsilon
\]

Since solutions to the membership problem for $(G,P_i)$ and the word problem for $P_i$ are given, it is possible to find a shortest word in $(X\cup\mathcal{P})^*$ with $\PP$-components of bounded length as above, which represents the same element as $w$. The length of this word is then $\abs{w}_{X\cup\mathcal{P}}$.
\end{enumerate}
\end{proof}
\end{lemma}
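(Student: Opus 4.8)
The three operations all rely on a common foundation: a solution to the word problem in $G$. The plan is to first establish this, then treat the three operations in turn, each reducing to tools already developed in Sections~\ref{sec:Preliminaries} and~\ref{sec:Distortion}. For the word problem, I would invoke the assumption that each $P_i$ has solvable word problem (which follows from assumption~1, taking $O=P_i$), and cite that a solution to the word problem in $G$ can be constructed from these—this is exactly the kind of transfer result that holds for relatively hyperbolic groups with well-behaved peripheral subgroups. With the word problem in hand, Theorem~\ref{thm:dehn+delta} lets me compute an upper bound for the relative Dehn function, and hence by Theorem~\ref{thm:undistorted parabolics} the distortion $\Dist_{(G,X)}^{(P_i,X_i)}$ is computable. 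This single computable quantity is the engine driving all three parts.

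For (O1), the idea is to break the distortion $\Dist_{(G,X)}^{(O,Y)}$ into a composition that passes through $P_i$. Conjugating $O$ by $g^{-1}$ lands it inside $P_i$, so I would use Lemma~\ref{lem:distortion facts}(2) to absorb the conjugation cost and Lemma~\ref{lem:chained distortion} to factor the distortion of $O^{g^{-1}}$ in $G$ through $P_i$. The membership problem for $(P_i,O^{g^{-1}})$ is given by assumption~1, so Theorem~\ref{thm:memb prob<->distortion}(1) makes $\Dist_{(P_i,X_i)}^{(O^{g^{-1}},S^{g^{-1}})}$ computable; composing with the already-computable $\Dist_{(G,X)}^{(P_i,X_i)}$ yields a computable upper bound. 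The bookkeeping of generating sets (passing between $Y$ and the generating set $S$ of $O$ via Lemma~\ref{lem:distortion facts}(1)) is routine but must be arranged so that every distortion function appearing is one I can actually compute. For (O2), once (O1) gives a computable distortion function for $O$ in $G$, Theorem~\ref{thm:memb prob<->distortion}(2) immediately converts it into a membership-problem solution for $(G,O)$.

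Operation (O3) is where I expect the real difficulty, and it reduces to computing the relative length $\abs{w}_{X\cup\mathcal{P}}$ of an arbitrary word, since a $(C,C)$-quasigeodesic test then amounts to checking the defining inequality on all subwords. The strategy is to bound the search for a geodesic representative. First, using solvability of the membership problem for $(G,P_i)$ (a consequence of (O1)–(O2) applied with $O=P_i$), I can rewrite $w$ as a word $w'$ without backtracking. Then I would apply Theorem~\ref{thm:bcp} with the crude quasigeodesic constants $(\abs{w'},\abs{w'})$: both $w'$ and any geodesic representative $v$ are trivially $(\abs{w'},\abs{w'})$-quasigeodesics with the same endpoints, so part~iii) of the theorem bounds the $d_X$-length of every $\PP$-component of $v$ in terms of the longest $\PP$-component of $w'$ plus $2\varepsilon$. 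This a priori bound on component lengths is the crucial step: it confines the geodesic to finitely many candidates, since the $X$-part of a geodesic is bounded by $\abs{w'}$ and now the $\PP$-components are bounded too. A finite search over words in $(X\cup\mathcal{P})^*$ with bounded components, using the word problem in $G$ to test equality, then returns $\abs{w}_{X\cup\mathcal{P}}$ exactly. The main obstacle is precisely justifying that the component-length bound from Theorem~\ref{thm:bcp} genuinely makes the candidate set finite and searchable; everything else is assembly of prior results.
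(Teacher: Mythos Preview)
Your plan is correct and matches the paper's proof essentially step for step: the same reduction to the word problem in $G$, the same chain of distortion bounds via Lemmas~\ref{lem:distortion facts} and~\ref{lem:chained distortion} for (O1), the same appeal to Theorem~\ref{thm:memb prob<->distortion} for (O2), and the same BCP-based bound on $\PP$-component lengths of a geodesic to make the search in (O3) finite. The only minor imprecision is that bounding the $\PP$-components of $v$ uses parts~ii) and~iii) of Theorem~\ref{thm:bcp} together, not~iii) alone; otherwise the argument is exactly the paper's.
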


\begin{proof}[of Theorem~\ref{thm:main thm}]
The algorithm is as follows:

Use the algorithm of Theorem~\ref{thm:dehn+delta} to compute a hyperbolicity constant $\delta$ for the Cayley graph $\Gamma(G,X\cup\mathcal{P})$.

Begin enumerating all $m\in\N$ and $g_j\in G$, $i_j\in\{1,\ldots,n\}$ and finite sets $Y_j\subseteq H\cap P_{i_j}^{g_j}$ for all $j\in\{1,\ldots,m\}$. This is possible, since (O2) provides a solution to the membership problem of $(G,P_i)$ for all $i\in\{1,\ldots,n\}$, and hence the set of all triples $(h,g,i)\in H\times G\times\{1,\ldots,n\}$ with $ghg^{-1}\in P_i$ can be computed.

While enumerating these sets, run the following partial algorithm for each new set in parallel:

\begin{itemize}
\item Step 1:

Let $\mu:=\max(\{\abs{y}\mid y\in Y\}\cup\{2\abs{g_j}+1\mid1\leq j\leq m\})$ and $N:=\lceil2(\mu+1)^2\rceil$. Continue with step~2.

\item Step 2:

There are computable $L>0$ and $\lambda\geq1,c\geq0$ only depending on $\delta$ and $N$, such that every $(L,N,N)$-local-quasigeodesic in $\Gamma(G,X\cup\mathcal{P})$ is a $(\lambda/\mu,c)$-quasigeodesic. (cf.~Coornaert~et~al.~\cite{Coornaert1990}, Ch.~3, Thm.~1.4)

Let $C:=\sqrt{\frac{N}{2}}-\mu$ and $\varepsilon=\max\{\varepsilon(\lambda/\mu,c+\mu),\varepsilon(N,N+\mu)\}$ as in the conclusion of Theorem~\ref{thm:bcp}. Using (O1), find an upper bound
\[
D\geq\max_{1\leq j\leq m}\left(\Dist_{(G,X)}^{(O_j,Y)}(\varepsilon+2\abs{g_j})\right)
\]
Continue with step~3.

\item Step 3:

Utilizing (O2), check for all $j,k\in\{1,\ldots,m\}$ with $i:=i_j=i_k$ and for all $h\in Y^*$ with $\abs{h}\leq\max\left\{DN(\mu+N),D(\lambda/\mu)(\mu+c)\right\}$, whether $g_jhg_k^{-1}\in P_i$ and $h\notin O_j$ if $j=k$.

If no such $h$ is found, continue with step~4, else terminate this instance of the partial algorithm.

\item Step 4:

Check with (O3) for every geodesic word $w\in(Y\cup\mathcal{O})^*$ with $\abs{w}\leq L$ and $\abs{o}_Y\leq D$ for every letter $o\in\mathcal{O}$ of $w$, whether $\iota(w)$ is a $(C,C)$-quasigeodesic word in $(X\cup\mathcal{P})^*$.

If there is a word $w$ such that $\iota(w)$ is not $(C,C)$-quasigeodesic, increase $N$ by one and repeat from step~2, else continue with step~5.

\item Step 5:

Using the algorithm from assumption 2., check for every $j\in\{1,\ldots,m\}$, whether $O_j$ is finite. Eliminate all finite peripheral subgroups from $\OO$.

Terminate the whole algorithm and return the constructed peripheral structure $\OO$ of $H$ as well as $\lambda$ and $c$.
\end{itemize}

\medskip
First, assume this algorithm terminates. The elimination of all finite peripheral subgroups in step~5 does not change that, as determined in step~4, all geodesic words $w\in(Y\cup\mathcal{O})^*$ with $\abs{w}\leq L$ and short $\OO$-components are mapped to $(C,C)$-quasigeodesics under $\iota$. So every geodesic word $w\in(Y\cup\mathcal{O})^*$ with $\abs{w}\leq L$ is according to Lemma~\ref{lem:2} mapped to a $(N,N)$-quasigeodesic word in $(X\cup\mathcal{P})^*$.

Therefore, every geodesic word $w\in(Y\cup\mathcal{O})^*$ is mapped under $\iota$ to a $(L,N,N)$-locally-quasigeodesic word. Because of the choice of $L$ and $N$, the inclusion $(H,d_{Y\cup\mathcal{O}})\to(G,d_{X\cup\mathcal{P}})$ is a $(\lambda,c)$-quasiisometric embedding. In particular, $H$ is relatively quasiconvex in $(G,\PP)$ by Theorem~\ref{thm:qc-embedding->rel qc}.

It remains to show that $\OO$ is an induced structure. Let $i\in\{1,\ldots,n\}$ and $g\in X^*$ such that $H\cap P_i^g$ is infinite. If $h\in H\cap P_i^g$, then:
\[
\abs{h}_{Y\cup\mathcal{O}}\leq\lambda\abs{h}_{X\cup\mathcal{P}}+\lambda c\leq\lambda(2\abs{g}+1)+\lambda c
\]
Since $H\cap P_i^g$ is infinite but contained in a ball of finite radius in $\Gamma(H,Y\cup\mathcal{O})$, there has to be a geodesic word $h=h_1oh_2\in(Y\cup\mathcal{O})^*$, representing an element $g^{-1}pg\in H\cap P_i^g$, which contains an $O_j$-component $o=g_j^{-1}p'g_j$ of $\abs{\cdot}_Y$-length at least $D$. The $P_{i_j}$-component $p'$ of $\iota(h)=\iota(h_1)g_j^{-1}p'g_j\iota(h_2)$ therefore has $\abs{\cdot}_X$-length at least $\varepsilon$. 

Hence, by Theorem~\ref{thm:bcp}, $i=i_j$ and the $P_i$-components $p$ of $g^{-1}pg\in(X\cup\mathcal{P})^*$ and $p'$ of $\iota(h)$ are connected. This means that $p'':=g_jh_1^{-1}g^{-1}\in P_i$ and therefore:
\[
O_j^{h_1^{-1}}\subseteq(H\cap P_i^{g_j})^{h_1^{-1}}=H^{h_1^{-1}}\cap P_i^{p''g}=H\cap P_i^g
\]
It now suffices to show that $O_j=H\cap P_i^{g_j}$ for all $1\leq j\leq m$.

Since geodesic words of arbitrary length are mapped to $(\lambda/\mu,c)$-quasigeodesic words under $\iota$ and since there is no $h\in H\cap P_i^{g_j}\setminus O_j$ with $\abs{h}_Y\leq D\frac{\lambda}{\mu}(\mu+c)$, $\iota$ must map all geodesic words to words without backtracking by Lemma~\ref{lem:1}. This implies that there can be no $h\in H\cap P_i^{g_j}\setminus O_j$ at all. Therefore $H\cap P_i^{g_j}=O_j$.

\medskip
Conversely it is easy to see, that the algorithm will terminate if $H$ is relatively quasiconvex in $G$:

Since $(G,\PP)$ is relatively hyperbolic, the algorithm of Dahmani from Theorem~\ref{thm:dehn+delta} will terminate. Since all subgroups in the induced structure of a finitely generated relatively quasiconvex subgroup are finitely generated (Osin \cite{Osin2006}, Proposition~2.29), it is also clear that enumerating the sets on which to run the partial algorithm will eventually produce an induced structure, if $H$ is relatively quasiconvex. For an induced structure the inclusion $(H,d_{Y\cup\mathcal{O}})\to(G,d_{X\cup\mathcal{P}})$ is a quasiisometric embedding (cf.~Hruska~\cite{Hruska2010}, Thm.~10.1), hence the partial algorithm will terminate for such a structure.
\end{proof}

\section{Discussion}\label{sec:Discussion}

This section first discusses the necessity of the main theorem's assumptions and to what extend it might be possible to relax or even omit them. Secondly, a more constructive approach to the algorithm is sketched and evaluated.

\smallskip
The given algorithm relies strongly on the solutions to the membership problems provided by assumption~$1$. In particular step~3 of the partial algorithm depends on these solutions, since it searches for parabolic elements of $H$, which are not accounted for by the peripheral structure. A meaningful relaxation of these conditions, if at all possible, would therefore at least require a fundamental change of the algorithm.

In contrast, it is possible to prove a version of the main theorem, which does not require the decidability of the finiteness of finitely generated parabolic subgroups. In this version of the algorithm the elimination of the finite peripheral subgroups in step~5 is dropped. The consequence of this modification is that the returned peripheral structure will consist of an induced structure together with a collection of finite groups.

It is also easy to see that assumption~$2$.\ is a necessary condition to eliminate the finite subgroups from the induced structure:
\newline Let $G$ be a group with solvable word problem and $H$ a finitely generated subgroup of $G$. $H$ is relatively quasiconvex in $(G,\{G\})$ and the induced structure of $(G,\{G\})$ on $H$ is $\{H\}$ if $H$ is infinite and $\emptyset$ if $H$ is finite. Therefore any algorithm determining the induced structure of $H$ also determines whether $H$ is finite.

\smallskip
The given algorithm relies on the ability to verify, whether a given peripheral structure of $H$ is an induced structure and then systematically checks every possible peripheral structure until it finds an induced one. It is also possible to take a more constructive approach to finding an induced structure. Instead of terminating the partial algorithm in step~3 if an $h$ fulfilling the conditions is found, it would be possible to modify the peripheral structure according to Lemma~\ref{lem:enlarge parabolics}, i.e.\ adding $h$ to $Y_j$ if $j=k$ and replacing $Y_j$ and $Y_k$ by $Y_j^h\cup Y_k$ if $j\neq k$, and then repeat from step~2.

Through this process the peripheral subgroups will be replaced by pairwise non-conjugate maximal parabolic subgroups eventually. However, this new peripheral structure does not have to include an induced structure, since it is not clear at the beginning of this process, that for every infinite maximal parabolic subgroup of $H$ there is a peripheral subgroup in $\OO$ which can in $H$ be conjugated into this maximal parabolic subgroup.

A more constructive approach would therefore require the addition of new groups to the peripheral structure, as well as the modification of peripheral groups as described above. Without the ability to verify that a given peripheral structure is not an induced structure this still requires running parallel partial algorithms. Hence it is in general not clear which approach is the more efficient for a concrete implementation of the algorithm.

The algorithm given in the proof above was chosen because it requires less careful bookkeeping than the constructive approach and is therefore more comprehensible.

\newpage
\addcontentsline{toc}{section}{References}
\bibliography{References} 
\bibliographystyle{ieeetr}

\end{document}